\newcommand{\PP}{\mathbb P}
\newcommand{\CC}{{\mathbb C}}
\newcommand{\NN}{{\mathbb N}}
\newcommand{\ZZ}{{\mathbb Z}}
\newcommand{\QQ}{{\mathbb Q}}
\newcommand{\RR}{{\mathbb R}}
\newcommand{\Num}{\mathrm{Num}}
\newcommand{\wt}{\mathrm{wt}}
\newcommand{\IK}{{{\rm I}}}
\newcommand{\II}{{\mathop{\rm II}}}
\newcommand{\III}{{\mathop{\rm III}}}
\newcommand{\IV}{{\mathop{\rm IV}}}
\newcommand{\tD}{2h}
\newcommand{\jD}{h}
\newcommand{\polD}{H}
\DeclareMathOperator{\rank}{rank}
\newcommand{\WW}{Y}  
\numberwithin{equation}{section}
\begin{document}

\title{12 rational curves on Enriques surfaces}

\author{S\l awomir Rams}
\address{Institute of Mathematics, Jagiellonian University, 
ul. {\L}ojasiewicza 6,  30-348 Krak\'ow, Poland} 
\email{slawomir.rams@uj.edu.pl}

\author{Matthias Sch\"utt}
\address{Institut f\"ur Algebraische Geometrie, Leibniz Universit\"at
  Hannover, Welfengarten 1, 30167 Hannover, Germany}

    \address{Riemann Center for Geometry and Physics, Leibniz Universit\"at
  Hannover, Appelstrasse 2, 30167 Hannover, Germany}

\email{schuett@math.uni-hannover.de}

\date{April 1, 2021}
\thanks{Research partially supported by the National Science Centre, Poland, Opus  grant 
no.\ 2017/25/B/ST1/00853
(S.\ Rams)}
\subjclass[2010]
{Primary: {14J28};  Secondary {14J27, 14C20}}
\keywords{Enriques surface,  rational curve, polarization, genus one fibration, hyperbolic lattice,
parabolic lattice}

\begin{abstract}
Given $d\in\NN$, we prove that  
any polarized Enriques surface (over any field $k$ of characteristic $p \neq 2$ or with a smooth K3 cover) 
of degree greater than $12d^2$
contains at most 12 rational curves of degree at most $d$. For $d>2$ we construct examples of Enriques surfaces of high degree 
that contain exactly 12 rational degree-$d$ curves. 
%
\end{abstract}

\maketitle

\newcommand{\XXd}{X_{d}}
\newcommand{\XXf}{X_{4}}
\newcommand{\XXp}{X_{5}}
\newcommand{\mF}{\mathcal F}
\newcommand{\MW}{\mathop{\mathrm{MW}}}
\newcommand{\mL}{\mathcal L}
\newcommand{\mR}{\mathcal R}
\newcommand{\Ruledeight}{S_{11}}
\newcommand{\Ruledfour}{S_{4}}
\newcommand{\DivisorRest}{\mathfrak Rest}
\newcommand{\Pl}{\Pi}
\newcommand{\reg}{\operatorname{reg}}

\theoremstyle{remark}
\newtheorem{obs}{Observation}[section]
\newtheorem{rem}[obs]{Remark}
\newtheorem{example}[obs]{Example}
\newtheorem{ex}[obs]{Example}
\newtheorem{conv}[obs]{Convention}
\theoremstyle{definition}
\newtheorem{Definition}[obs]{Definition}
\theoremstyle{plain}
\newtheorem{prop}[obs]{Proposition}
\newtheorem{theo}[obs]{Theorem}
\newtheorem{Theorem}[obs]{Theorem}
\newtheorem{lemm}[obs]{Lemma}
\newtheorem{crit}[obs]{Criterion}
\newtheorem{claim}[obs]{Claim}
\newtheorem{Fact}[obs]{Fact}
\newtheorem{cor}[obs]{Corollary}
\newtheorem{assumption}[obs]{Assumption}

\newcommand{\ux}{\underline{x}}
\newcommand{\ud}{\underline{d}}
\newcommand{\ue}{\underline{e}}
\newcommand{\mmS}{{\mathcal S}}
\newcommand{\mmP}{{\mathcal P}}
\newcommand{\nlines}{\mbox{\texttt l}(\XXp)}
\newcommand{\ii}{\operatorname{i}}

\newcommand{\nonlinflec}{{\mathcal D}}
\newcommand{\linflec}{{\mathcal L}}
\newcommand{\flec}{{\mathcal F}}


\section{Introduction}
\label{intro}

It is known by the work of Barth--Peters, Nikulin, and Kondo
that any Enriques surface $Y$ over $\CC$ contains infinitely many (possibly singular) rational curves.
In fact, as soon as there is a single smooth rational curve, then there are infinitely many of them
outside 7 specific cases -- the types I--VII due to Nikulin \cite{Nik} and Kondo \cite{Kondo}
which will also be relevant for our considerations.
Here we restrict the problem to (again, possibly singular) rational curves of small degree
relative to a given polarization $H$ of $Y$.
For $d\in\NN$, let
\[
r_d := r_d(Y) := \#\{
\text{rational curves } C\subset Y \text{ with } \deg(C)= d = C.H
\}
\]
and
\[
S_d := r_1+\hdots+r_d = \#\{\text{rational curves } C\subset Y \text{ with } \deg(C)\leq d\},
\]
We work over an algebraically closed field $k$,
but our main result  is almost independent of the characteristic in the sense
that we only require that the K3 cover of the Enriques surface $Y$ is smooth 
(i.e.\ we exclude the types of classical and supersingular Enriques surfaces in characteristic $2$):

\begin{Theorem} \label{thm}
Let $d\in\NN$.
Assume that $Y$ is an Enriques surface with smooth K3 cover.
Fix a polarization $H$ on $Y$ such that $H^2>12d^2$.
Then 
\[
S_d\leq 12.
\]
\end{Theorem}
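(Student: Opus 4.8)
The plan is to translate the problem into lattice geometry inside $\Num(Y)\cong U\oplus E_8$, which is even of signature $(1,9)$, and then to exploit a genus one fibration to pin down the constant $12$. First I would record what the degree hypothesis forces. Writing $D=\frac{D.H}{H^2}H+P_D$ with $P_D$ in the negative definite orthogonal complement $H^\perp\otimes\RR\cong\RR^9$, one has
\[
P_D^2=D^2-\frac{(D.H)^2}{H^2}\leq 0 .
\]
For a rational curve $C$ counted in $S_d$ one has $C.H\leq d$, so $(C.H)^2/H^2<d^2/(12d^2)=1/12$; since $C^2=2p_a(C)-2\geq -2$ and $P_C^2\leq 0$ forces $C^2\leq 0$, we get $C^2\in\{-2,0\}$. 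Thus every curve in $S_d$ is either a smooth rational curve ($C^2=-2$) or a nodal/cuspidal half-fiber class ($C^2=0$, $p_a=1$), the latter being rigid and spanning a genus one fibration, hence finitely many and highly constrained. For the smooth ones the projections satisfy $P_{C_i}^2\in(-\tfrac{25}{12},-2)$, while for two distinct irreducible curves $P_{C_i}.P_{C_j}=C_i.C_j-(C_i.H)(C_j.H)/H^2$ lies in $(-\tfrac1{12},0)$ when they are disjoint and exceeds $\tfrac{11}{12}$ when they meet.

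This already yields a clean sub-bound: a set of pairwise disjoint smooth rational curves gives vectors $\{P_{C_i}\}$ whose Gram matrix is strictly diagonally dominant (diagonal near $2$, off-diagonal below $\tfrac1{12}$ in absolute value), hence nonsingular, so at most $\dim H^\perp=9$ of them occur. A purely metric estimate of this kind, however, only caps the total number of curves at roughly $18$, so it cannot by itself reach $12$; the sharp constant must come from the intrinsic geometry of $Y$. Here I would bring in a genus one fibration $\pi\colon Y\to\PP^1$ with fiber class $F$ ($F^2=0$) adapted to the configuration and split the curves according to $C_i.F$: the \emph{vertical} ones are components of reducible fibers, while the \emph{horizontal} ones are (multi)sections whose multiplicity $C_i.F$ is bounded because $C_i.H$ is small. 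The number of vertical smooth rational curves is controlled by the Kodaira types through $\sum_v m_v\leq\sum_v e(F_v)=e(Y)=12$ (each reducible fiber satisfying $m_v\leq e(F_v)$), which is exactly where the Euler number $e(Y)=12$ produces the target constant; the horizontal curves are then controlled via their bounded $F$-degree and the Shioda--Tate/Mordell--Weil bookkeeping in the rank-$10$ lattice.

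The main obstacle is assembling these two counts into a single sharp bound of $12$ rather than a weaker explicit constant: one must choose the fibration so that the vertical and horizontal contributions cannot simultaneously be large, and feed in the precise shape of the hyperbolic lattice $U\oplus E_8$ together with the near-orthogonality estimates above, likely through a case analysis mirroring Kondo's extremal types I--VII (where exactly $12$ smooth rational curves occur). Secondary obstacles are the $C^2=0$ half-fibers, the existence of a suitable fibration, and characteristic issues -- quasi-elliptic pencils in characteristic $3$ and the role of the smooth K3 cover hypothesis. A clean way to sidestep the delicate combinatorics, and the route I would try first, is descent from the K3 cover $\pi\colon\widetilde Y\to Y$: since the covering involution is fixed-point free and a rational curve admits no connected \'etale double cover, each curve in $S_d(Y)$ pulls back to two disjoint rational curves on $\widetilde Y$ of the same degree with respect to $\widetilde H=\pi^*H$, where $\widetilde H^2=2H^2>24d^2$. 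An analogous bound of $24$ low-degree rational curves on the K3 cover would then give $2\,S_d\leq 24$, i.e.\ $S_d\leq 12$, explaining both the constant and the factor $2$ in the hypothesis $H^2>12d^2$; the burden is thereby shifted to the (equally substantial) K3 statement.
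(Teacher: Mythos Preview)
Your sketch correctly isolates the parabolic regime---where all curves in $\Gamma$ sit in fibres of a genus one fibration and the Euler number $e(Y)=12$ gives the bound directly---and the hyperbolic regime as the obstruction; the derivation of $C^2\in\{-2,0\}$ and the bound of $9$ on pairwise disjoint $(-2)$-curves are right and match the paper's starting point. The genuine gap is that you do not resolve the hyperbolic case beyond flagging it as ``the main obstacle''. This is where essentially all the work lies (Sections 5--8 of the paper): one first shows that a hyperbolic $\Gamma$ with $\#\Gamma>12$ must support a half-pencil of type $\IK_n$ with only bisections of $|2D|$ in $\Gamma$, then reduces to type $\IK_3$ or $\IK_4$, and finally runs a case analysis bounding the \emph{intrinsic polarization} $H_{\Gamma'}$ (the unique class in $\QQ\Gamma'$ matching the prescribed degrees on a hyperbolic test subgraph $\Gamma'$) via the inverse Gram matrix, using $H^2\leq H_{\Gamma'}^2$ to force $H^2<12d^2$ in every subcase. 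Your ``Shioda--Tate/Mordell--Weil bookkeeping'' does not supply this mechanism; height computations enter the paper only to eliminate a handful of specific fibre configurations, not to produce the sharp constant.

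On the K3 descent: the reasoning ``a rational curve admits no connected \'etale double cover'' is false for singular rational curves---a nodal cubic has \'etale fundamental group $\hat\ZZ$, and its connected double cover is an $\IK_2$ configuration, so the two components upstairs meet. The inequality $2S_d(Y)\leq S_d(X,\pi^*H)$ can be salvaged, but not by that sentence. More importantly, the shortcut delivers Theorem \ref{thm} only if the K3 result of \cite{RS24} already gives $S_d\leq 24$ under the hypothesis $\widetilde H^2>24d^2$, and only if $\pi^*H$ qualifies as a polarization there (note that $\pi^*|H|$ merely separates Enriques-orbits, so very ampleness of $\pi^*H$ is not automatic). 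The authors of both papers redo the argument on the Enriques side rather than descend, which is a strong signal that the K3 degree threshold in \cite{RS24} is not tight enough for the descent to recover the constant $12d^2$ here---you should check the exact K3 hypothesis before committing to this route.
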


The methods to prove this result 
build on  those developed for K3 surfaces  in \cite{RS24},
but remarkably, the result is substantially stronger in the sense that the bound $H^2>12d^2$
cannot be improved much (outside characteristics $2,5$ at least, see Proposition \ref{prop:13}) for $d>1$.
Here the often delicate geometry of Enriques surfaces acts very much to our advantage.
We also show by engineering explicit surfaces that the bound is attained as follows:

\begin{prop}
\label{prop}

\begin{enumerate}
\item[(i)]
For any even $d\geq 6$ and any $h\geq 3d$, there is a $2h$-polarized Enriques surface
with $r_d=12$.
\item[(ii)]
Assume that char$(k)\neq 2$. For any $d\geq 3$ and any $h\geq 9$ with $d\mid h$, there is a $2h$-polarized Enriques surface
containing 12 smooth rational curves of degree $d$. 
\end{enumerate}
\end{prop}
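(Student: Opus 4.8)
The plan is to produce, for each admissible pair $(d,h)$, an explicit Enriques surface carrying a genus-one fibration $\pi\colon Y\to\PP^1$ together with a polarization $H$ assembled out of the fibration, and to arrange the geometry so that precisely twelve rational curves meet $H$ in degree $d$. The upper bound $r_d\le 12$ is then supplied for free by Theorem~\ref{thm} as soon as $h$ is large enough that $2h=H^2>12d^2$; since the stated ranges ($h\ge 3d$ in (i), and $h\ge 9$ with $d\mid h$ in (ii)) contain arbitrarily large $h$, this already yields examples attaining the bound. For the remaining, smaller values of $h$ the exact equality $r_d=12$ in (i) has to be checked by hand, which I would do by keeping the Picard lattice of the constructed surface as small and rigid as possible, so that every class of a degree-$d$ rational curve can be enumerated directly inside $\Num(Y)$.

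For part (i) I would exploit that a genus-one fibration on an Enriques surface carries no section, its two half-fibers obstructing one, so that the curves transverse to $\pi$ of least degree are bisections $B$ with $B.F=2$, where $F$ is the fibre class. Taking $H$ fibre-heavy, of the form $H\equiv aF+B_0$ for a fixed bisection $B_0$, then forces the $H$-degrees of the distinguished family of twelve rational curves to be even — whence the parity hypothesis on $d$ — while the free parameter $a$ is tuned so that all twelve acquire the common value $d$ subject to $H^2=2h$; the bound $h\ge 3d$ is exactly what keeps this $H$ ample. For part (ii), with $\mathrm{char}(k)\ne 2$, I gain enough freedom to realize the twelve curves as genuinely smooth rational $(-2)$-curves in a prescribed configuration and to drop the parity restriction on $d$; the hypothesis $d\mid h$ is then used to rescale the fibre contribution of $H$ so that $H^2=2h$ is met with every curve of the configuration still meeting $H$ in degree exactly $d$.

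The two technical hearts of the argument, which I expect to be the main obstacles, are (a) verifying ampleness of the engineered class $H$ — equivalently, strict positivity of $H$ on all $(-2)$-curves and on every component of a reducible fibre, which demands careful bookkeeping of the fibre configuration and of the numerical effective cone — and (b) showing that no rational curve other than the twelve exhibited ones has degree $d$, for which I would invoke Theorem~\ref{thm} in the large-$h$ range and a direct lattice count in the rigid small-$h$ range. Existence of the surfaces themselves I would obtain from the surjectivity of the period map for Enriques surfaces over $\CC$, together with a specialization/lifting argument to reach positive characteristic while preserving smoothness of the K3 cover; concretely it is cleanest to write down a Weierstrass-type model of $\pi$ whose Mordell--Weil and torsion structure generates the twelve curves, so that the entire configuration, and in particular the count of twelve, can be read off from the arithmetic of the generic fibre.
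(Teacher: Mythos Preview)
Your broad strategy---engineer an Enriques surface with a genus-one fibration and build $H$ from fibration data so that twelve fibre-supported rational curves acquire degree $d$---does match the paper. But two concrete pieces of your plan for part (i) do not work as stated.

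First, the ansatz $H\equiv aF+B_0$ with a single bisection $B_0$ is too rigid: for a full fibre $G$ one gets $H.G=B_0.G=2$ regardless of $a$, so the twelve rational curves (which in the paper are the twelve nodal $I_1$ fibres) would all have degree $2$, not an arbitrary even $d\ge 6$. The paper's polarization instead uses \emph{two} half-pencils $E,E'$ with $E.E'=1$: one sets $H=NE+D$ with $D=E+d'E'+D'$ for suitable $D'\in\langle E,E'\rangle^\perp$, so that each $I_1$ fibre of $|2E|$ has degree $2d'$, while $N$ is the free parameter realising all $h\ge 3d'$. The surface itself is taken to be a \emph{general unnodal} Enriques surface---no $(-2)$-curves at all, every genus-one pencil with twelve nodal cubics and smooth half-fibres---which both trivialises the very-ampleness check via Criterion~\ref{crit} and supplies the twelve singular rational curves.

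Second, your plan to obtain the equality $r_d=12$ from Theorem~\ref{thm} fails over essentially the whole claimed range: the theorem needs $H^2>12d^2$, whereas part (i) allows $h$ as small as $3d$, i.e.\ $H^2=6d$, far below $12d^2$ for $d\ge 6$. A ``direct lattice count'' in $\Num(Y)\cong U\oplus E_8$ will not rescue this either, since that lattice is the same for every Enriques surface; what varies is the effective cone, and that is exactly what the paper controls by choosing $Y$ unnodal and generic.

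For part (ii) your outline is close, and note that (ii) only asserts that $Y$ \emph{contains} twelve smooth rational degree-$d$ curves, not $r_d=12$, so no upper bound is needed there. The paper does not invoke periods or lifting; it writes down an explicit quadratic-twist family of rational elliptic surfaces whose base-changed K3 has twelve $I_2$ fibres and an anti-invariant section of height $6$, takes the Enriques quotient (six $I_2$ fibres and a nodal bisection $B$), and sets $H=NE+dE'$ with $E'=B-E$, so that $H^2=2Nd$ (whence $d\mid h$) and every $I_2$ component has degree exactly $d$.
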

 Over fields of characteristic $2$ the statement of Proposition \ref{prop}~(ii)
no longer holds, see Proposition \ref{prop:p=2}.

The cases of smooth rational curves of degree $1$ and $2$, i.e.\ lines and conics,
are more delicate than the above and will  be treated elsewhere.

%
%

%
%

\section{Set-up}

Throughout this note, we consider polarized Enriques surfaces of degree $2h$,
 i.e.\ pairs $(Y,H)$ where $Y$ is an Enriques surface  over $k$
 and  $H$ 
is a very ample divisor of square $\polD^2=\tD$.
If char$(k)=2$, then we assume that $Y$ has a smooth K3 cover $X$
(these Enriques surfaces are often called singular).
Then, the linear system $|\polD|$ defines an embedding
\[
\varphi_{|\polD|}: \quad Y \hookrightarrow Y_{\polD} \subset \PP^{\jD}
\]
which is an isomorphism onto its image.

It is well-known that each very ample divisor on an Enriques surface satisfies the inequality $H^2 \geq 10$. 
For $\mbox{char}(k) \neq 2$ the question whether  
 a given divisor $H$ on $Y$ 
is very ample
can be answered by the methods developed in \cite{cossec-projective}. The discussion of the general case  
can be found in \cite{Cossec-Dolgachev-Liedtke}.
For the convenience of the reader we recall the (well-known) result we need below.

\begin{crit}
\label{crit}
{\rm (\cite[Remark~2.4.19]{Cossec-Dolgachev-Liedtke})} Let $H$  be a big and nef divisor on an Enriques surface $Y$. 
Then $H$ is very ample if and only if
\begin{equation} \label{eq-crit-very-ample}
H.D \geq 3 \mbox{ for every half-pencil } D \mbox{ of a genus 1 fibration on } Y
\end{equation}
and $H.E > 0$ for every $(-2)$-curve $E \subset Y$.
\end{crit}

\begin{proof} 
By \cite[Lemma~2.4.10]{Cossec-Dolgachev-Liedtke} there exists a half-pencil $D'$ of a genus 1 fibration on $Y$
such that the equality $\Phi(H)= H.D'$ holds.
Thus Criterion~\ref{crit} is   \cite[Remark~2.4.19]{Cossec-Dolgachev-Liedtke}.
\end{proof}

%
%
%

\section{Basics}
\label{s:prep}

Given a polarized Enriques surface $Y$ we follow  \cite{RS24} and  consider the set
\[
\Gamma = \{\text{rational curves } C\subset \WW \text{ of degree } C.H\leq d \}
\]
which we  interpret as a graph without loops
with (possibly multiple) edges 
corresponding to the intersection points of rational curves  $C, C'\in\Gamma$.
Here each vertex $C\in\Gamma$ comes attached with two values,
the degree $C.H$ and the square $C^2$.
Together the rational curves generate the formal group 
$$M:=\ZZ\Gamma\subset\mathrm{Div}(\WW),
$$
equipped with the intersection pairing extending that on the vertices.
Clearly $M$ may be degenerate 
(i.e.\ $\ker(M)\neq 0$). By the Hodge index theorem we have the following three cases:
\begin{enumerate}
\item
elliptic -- $M\otimes\RR$ is negative-definite ($\ker(M)=0$);
\item
parabolic -- $M\otimes\RR$ is negative semi-definite, but not elliptic ($\ker(M)\neq 0$);
\item
hyperbolic -- $M\otimes\RR$ has a one-dimensional positive-definite subspace
and none of greater dimension.
\end{enumerate}

The elliptic case and the parabolic case can be studied exactly along the lines of \cite[\S 4,5]{RS24}
-- with the extra benefit that there are no quasi-elliptic fibrations complicating the analysis in small characteristic
(cf.\ \cite[Thm~4.9.3]{Cossec-Dolgachev-Liedtke}).  
For brevity, we just sketch the arguments:

\begin{lemm}
\label{lem:ortho}
\begin{enumerate}
\item[(i)]
If $M$ is elliptic, then it is an orthogonal finite sum of Dynkin diagrams (ADE-type).
\item[(ii)]
If $M$ is parabolic, then it is an orthogonal finite sum of Dynkin diagrams
and at least one isotropic vertex or 
extended Dynkin diagram ($\tilde A\tilde D\tilde E$-type).
\end{enumerate}
\end{lemm}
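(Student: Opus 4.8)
The plan is to classify the connected components of the graph $\Gamma$ and then read off the two cases from the definiteness of $M$. First I would record two numerical inputs. By adjunction on the Enriques surface $\WW$, where $K_Y$ is numerically trivial, every vertex $C\in\Gamma$ satisfies $2p_a(C)-2 = C.(C+K_Y) = C^2$; since $C$ is rational we have $p_a(C)\geq 0$, so $C^2$ is an even integer with $C^2\geq -2$, and $C^2=-2$ precisely when $C$ is smooth. Secondly, distinct irreducible curves meet non-negatively, so $C.C'\geq 0$ for distinct vertices. As all diagonal entries $C^2$ are even, $M$ is an \emph{even} lattice and the associated graph has only non-negative edge multiplicities.

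Next I would use (semi-)definiteness to pin down the local shape. In the elliptic case every nonzero class has negative square, forcing $C^2=-2$ for each vertex, and $(C+C')^2 = -4+2\,C.C' < 0$ gives $C.C'\leq 1$; thus $\Gamma$ is simply laced with all vertices of square $-2$. In the parabolic case every class has square $\leq 0$, so each vertex has $C^2\in\{-2,0\}$ and $C.C'\leq 2$. A vertex $C$ with $C^2=0$ must then be orthogonal to every other vertex, for if $C.C'=m>0$ then $(nC+C')^2 = C'^2 + 2nm \to +\infty$, contradicting negative semidefiniteness; such an isotropic vertex therefore splits off as an isolated component.

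With these constraints in hand, each connected component of $\Gamma$ is either (a) a single isotropic vertex ($C^2=0$), or (b) a connected graph all of whose vertices have square $-2$, whose Gram matrix equals $-1$ times a generalized Cartan matrix (edges of multiplicity $1$, together with the multiplicity-$2$ bond that can occur only in the parabolic case). For (b) I invoke the classical classification of such symmetric matrices: a negative-definite one is the Cartan matrix of a finite simply-laced root system, i.e.\ an ADE Dynkin diagram, while a negative-semidefinite but non-definite one is an affine simply-laced Cartan matrix, i.e.\ an extended Dynkin diagram of type $\tilde A\tilde D\tilde E$ (the double bond occurring only for $\tilde A_1$). Finiteness of $\Gamma$, hence of the decomposition, follows because $C.H$ bounded together with $C^2\geq -2$ confines the classes to a bounded region of $\Num(\WW)\otimes\RR$: as $H^2>0$, the orthogonal complement $H^\perp$ is negative definite, so any escape to infinity would produce a nonzero isotropic direction in $H^\perp$, which is impossible.

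Assembling the components finishes the argument. In the elliptic case $M$ is negative definite, so no component is of type (a) or an extended diagram; every component is ADE, which is (i). In the parabolic case the radical of $M$ is nonzero and equals the orthogonal sum of the radicals of the components, and definite components contribute nothing; hence at least one component must fail to be definite, i.e.\ must be an isolated isotropic vertex or an extended Dynkin diagram, with all remaining components of ADE type, which is (ii). The main obstacle I anticipate is not any single hard estimate but the bookkeeping at the boundary between the cases — chiefly verifying that isotropic vertices genuinely decouple and that the multiplicity-$2$ bond is the only non-simply-laced configuration that can arise — after which the statement reduces to the standard root-lattice dictionary.
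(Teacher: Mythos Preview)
Your argument is correct and follows the standard route; the paper itself gives no proof here, simply recording the statement and deferring to \cite{RS24}, where the same reduction to the ADE/affine-ADE classification of connected even negative-(semi)definite lattices generated by effective curves is carried out. Your treatment of the boundary issues (isotropic vertices decoupling, the multiplicity-$2$ bond appearing only as $\tilde A_1$) is exactly what is needed.

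One point worth tightening: your finiteness step bounds only the \emph{numerical classes} of the curves in $\Gamma$, not the curves themselves. In the elliptic case this suffices, since a class with $C^2=-2$ supports at most one irreducible curve (two distinct ones would have negative intersection). In the parabolic case, however, an isotropic class can be represented by several distinct irreducible rational curves---the nodal or cuspidal fibres of the genus one pencil $|2C|$ it determines---so bounding classes does not immediately bound $\#\Gamma$. To close this, add the observation that each such isotropic class yields a genus one fibration with only finitely many singular fibres; this is precisely what the paper invokes (via $e(Y)=12$) in the paragraph following the lemma, and it costs only a sentence here.
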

\begin{proof} 
(i) If $M$ is elliptic, then $C^2=-2$ for all $C\in \Gamma$.
Moreover,
\[
C.C'\leq 1\;\;\, \forall \, C, C'\in\Gamma,
\]
for otherwise $(C+C')^2\geq 0$.
Hence $M$ is an orthogonal  sum of Dynkin diagrams;
since $M$ embeds into the hyperbolic  lattice $\Num(Y)$ of rank $10$ (cf.\ \cite{BM76}),
the rank of $M$ is bounded by $\rho(Y)-1=9$.
Thus the orthogonal sum is finite  as claimed (and there are only finitely many possible configurations).

\noindent
(ii) If $M$ is parabolic, then we directly obtain a constraint on self-intersection
\begin{eqnarray}
\label{eq:02}
C^2 \in\{0, -2\} \;\;\; \forall \, C\in\Gamma.
\end{eqnarray}
Similarly, intersection numbers of pairs of curves are restricted as follows. 
For any isotropic $C\in\Gamma$, we have
\begin{eqnarray}
\label{eq:isotropic}
C.C'=0 \;\;\; \forall \, C' \in\Gamma,
\end{eqnarray}
for else $(2C+C')^2>0$. The same statements holds
for the intersection of $C' \in \Gamma$ with any isotropic divisor $D$ supported on $\Gamma$,
and for any two isotropic divisors supported on $\Gamma$.
For the same reason, the following inequality holds
\begin{eqnarray}
\label{eq:<=2}
C.C' \leq 2 \;\;\; \forall \; C, C'\in\Gamma\;\;  \text{ with } C^2=C'^2=-2.
\end{eqnarray}
Here equality makes $(C+C')$ isotropic and thus orthogonal to all other curves in $\Gamma$
(so $C, C'$ generate the extended Dynkin diagram $\tilde A_1$).
It follows that $\Gamma$ consists of pairewise disjoint isotropic curves (i.e.\ curves of arithmetic genus $p_a=1$,
a priori possibly infinite in number)
and standard or extended Dynkin diagrams (again finite in number for rank reasons).
\end{proof}

\begin{cor}
\label{cor:para}
\begin{enumerate}
\item[(i)]
If $M$ is elliptic, then $\#\Gamma = \rank M \leq 9$.
\item[(ii)]
If $M$ is parabolic, then $\#\Gamma \leq 12$.
\end{enumerate}
\end{cor}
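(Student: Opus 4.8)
The plan is to split along the two cases, using the Hodge index theorem (through the signature $(1,9)$ of $\Num(Y)$) for the elliptic case and the Euler number $e(Y)=12$ together with the fibration \eqref{eq:fibr} for the parabolic case. For part (i), I would first note that ellipticity makes $M\otimes\RR$ negative-definite, so the intersection form on $M$ is non-degenerate. Hence the vertices of $\Gamma$ are linearly independent, giving $\#\Gamma=\rank M$, and the natural map $M\to\Num(Y)$ is injective (its kernel would lie in the radical of $M$, which is $0$) with negative-definite image. Since $\Num(Y)\otimes\RR$ has signature $(1,9)$, any subspace on which the form is negative-definite has dimension at most $9$; therefore $\rank M\leq 9$, which is the asserted bound.

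For part (ii) the starting point is the genus one fibration \eqref{eq:fibr} furnished by Lemma \ref{lem:ortho}, whose fiber $D$ of Kodaira type is supported on $\Gamma$. The point I would make precise is that $D\in\ker(M)$: because $M$ is negative semi-definite and $D^2=0$, the isotropic class $D$ lies in the radical (for a semi-definite form the isotropic vectors coincide with the radical, by Cauchy--Schwarz). Consequently $D.C=0$ for every $C\in\Gamma$, and since $D$ is numerically proportional to the general fiber class, every curve of $\Gamma$ is vertical, i.e.\ a fiber component of \eqref{eq:fibr} — exactly the description recorded just before the statement.

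It then suffices to bound the number of rational fiber components. Here I would use additivity of the (topological, resp.\ $\ell$-adic) Euler characteristic along \eqref{eq:fibr}: a smooth genus one fiber has Euler number $0$, so $e(Y)=\sum_v e(F_v)$ summed over the finitely many singular fibers, where the two half-fibers enter through their reduced Euler numbers (multiplicity does not change the underlying space). For each Kodaira fiber type the number of irreducible components — all of which are rational once the fiber is singular — is at most the Euler number of that fiber, with equality exactly in the multiplicative case $I_n$. Summing these inequalities bounds the number of rational fiber components by $\sum_v e(F_v)=e(Y)=12$, and since $\Gamma$ is contained in that set, $\#\Gamma\leq 12$.

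The elliptic case and the final summation are routine; the step deserving the most care is the parabolic case. Specifically, I would want to be certain that the fiber class lies in the radical of $M$, so that no curve of $\Gamma$ can be horizontal, and then to verify the per-fiber inequality uniformly across all Kodaira types and, in particular, for the two multiple fibers, where the reduced-fiber Euler numbers are what must enter the count. I would also record that the standing hypotheses (smooth K3 cover, no quasi-elliptic fibrations) are precisely what keep the Euler number computation tame and make the formula $e(Y)=\sum_v e(F_v)$ valid in every characteristic under consideration.
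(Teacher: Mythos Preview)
Your proposal is correct and follows the same route the paper sketches: signature $(1,9)$ of $\Num(Y)$ for part (i), and the fibration \eqref{eq:fibr} together with $e(Y)=12$ for part (ii). Your filling-in of the details (radical argument forcing verticality, per-fiber inequality between component count and Euler number) is exactly what the paper's one-line justification leaves implicit. One small caveat: the smooth K3 cover hypothesis does not by itself eliminate wild ramification in \eqref{eq:fibr}, but this is harmless for your bound, since any wild terms $\delta_v\geq 0$ only strengthen the inequality $\sum_v(\text{components of }F_v)\leq\sum_v e(F_v)\leq e(Y)=12$.
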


\begin{proof}
(i) If $M$ is elliptic, then $\#\Gamma = $ rank$(M)\leq 9$ as in the proof of the part (i) of Lemma \ref{lem:ortho}.

\noindent
(ii) If $M$ is parabolic, then Lemma \ref{lem:ortho} 
implies the  existence of
a divisor $D$ of Kodaira type supported on $\Gamma$,
i.e.\ a nodal or cuspidal cubic or, in the case of an extended Dynkin diagram,
a configuration of $(-2)$ curves fitting a singular fibre
of an elliptic fibration 
 as classified by Kodaira \cite{Kodaira} and Tate \cite{Tate}.
Then $|D|$ or $|2D|$ induces a genus one fibration
\begin{eqnarray}
\label{eq:fibr}
Y \to \PP^1
\end{eqnarray}
with $D$ as a (multiple) fibre.
By construction,
$\Gamma$ is the set of rational fiber components of \eqref{eq:fibr} that have degree at most $d$.
The fibration formula for the Euler--Poincar\'e characteristic $e(Y)=12$
yields the claimed bound.
\end{proof}

The corollary also provides us with a recipe for producing Enriques surfaces
of arbitrary polarization
with 12 rational curves of small degree by arranging for them to be fibre components
of a suitable genus one fibration.
This will be exploited in Sections \ref{ss:even_d} and \ref{ss:all_d}.


\section{Divisors of Kodaira type}

Drawing closer to the proof of Theorem \ref{thm},
we assume throughout this and the next four sections
 that
\begin{eqnarray}
\label{eq:H^2}
H^2 >12d^2.
\end{eqnarray}
As in \cite[\S 7]{RS24}, this implies that the restrictions
\eqref{eq:02}, \eqref{eq:isotropic} and \eqref{eq:<=2}
continue to hold true.
Moreover, the analogue of \eqref{eq:isotropic},
%
%
%
\begin{eqnarray}
\label{eq:D-6d}
D.C'=0 \;\;\; \forall \text{ isotropic } C' \in\Gamma,
\end{eqnarray}
holds for any isotropic effective divisor $D$ with $\deg(D)\leq 6d$ supported on $\Gamma$.
To see this, just 
set $c:=D.C', d':=\deg(C'), d'':=\deg(D)$,
write down the Gram matrix of $H, C', D$, and compute its determinant
\[
-c(cH^2-2d''d') \, , 
\]
which is negative for $c\geq 1$.
But then the lattice generated by  $H, C', D$ cannot be hyperbolic, contradiction.

The following property will be instrumental for all arguments to follow.

\begin{lemm}
\label{lem:Kodaira_type}
If $\Gamma$ is not elliptic, then it supports a divisor of Kodaira type.
\end{lemm}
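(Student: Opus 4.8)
The plan is to show that a non-elliptic configuration $\Gamma$ must support a divisor of Kodaira type, and by the trichotomy of Section~\ref{s:prep} a non-elliptic $\Gamma$ is either parabolic or hyperbolic. If $\Gamma$ is parabolic, then Lemma~\ref{lem:ortho}(ii) already yields either an isotropic vertex (a rational curve $C$ with $C^2=0$, which by \eqref{eq:02} is an irreducible curve of arithmetic genus one, hence a nodal or cuspidal cubic and thus a divisor of Kodaira type $\IK_1$ or $\II$) or an extended Dynkin diagram $\tilde A\tilde D\tilde E$, which is precisely a configuration of $(-2)$-curves forming a fibre of Kodaira type. So the parabolic case is immediate, and the entire content of the lemma lies in the hyperbolic case, which is where I would concentrate all the work.

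So assume $\Gamma$ is hyperbolic. The idea is to extract from $\Gamma$ a \emph{maximal elliptic subconfiguration}, as the sentence preceding the lemma suggests. First I would choose a sub-graph $\Gamma_0\subset\Gamma$ spanning a maximal negative-definite sublattice $M_0=\ZZ\Gamma_0$; by Lemma~\ref{lem:ortho}(i), $M_0$ is an orthogonal sum of Dynkin diagrams (ADE-type), since a maximal negative-definite subconfiguration is in particular elliptic. Because $\Gamma$ itself is hyperbolic, $M_0$ cannot equal all of $\Gamma$, so there is some vertex $C_0\in\Gamma\setminus\Gamma_0$. By maximality of $M_0$, adjoining $C_0$ destroys negative-definiteness: the sublattice spanned by $M_0$ and $C_0$ acquires a non-negative direction, i.e.\ it is parabolic or hyperbolic. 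I would then analyse how $C_0$ attaches to the connected components of $\Gamma_0$. Here the constraints \eqref{eq:02}, \eqref{eq:<=2} and the vanishing of intersections among isotropic vertices are essential: they bound the intersection multiplicities $C_0.E\leq 2$ for $(-2)$-curves and force $C_0^2\in\{0,-2\}$, so that the combinatorics of the attachment is severely restricted.

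The heart of the argument is then to show that $C_0$ together with a suitable connected ADE-component of $\Gamma_0$ completes to an extended Dynkin diagram $\tilde A\tilde D\tilde E$ (or that $C_0$ is itself an isotropic vertex, giving a Kodaira fibre of type $\IK_1$ or $\II$ directly). The mechanism is the classical one: an ADE Dynkin diagram is \emph{maximal} among negative-definite root configurations, and the minimal way to break definiteness by adding one more $(-2)$-vertex with the allowed intersection numbers is to produce exactly the corresponding affine diagram $\tilde A_n,\tilde D_n,\tilde E_6,\tilde E_7,\tilde E_8$, which is a divisor of Kodaira type. I would argue this by examining, component by component, the kernel vector of the resulting parabolic lattice: the primitive isotropic class in the affine root lattice has the multiplicities of a Kodaira fibre, and one checks that the attachment data permitted by \eqref{eq:02} and \eqref{eq:<=2} matches exactly one affine diagram.

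The main obstacle I anticipate is twofold. First, controlling the case where $C_0$ meets \emph{several} components of $\Gamma_0$ at once, or meets a single component in more than one point or with multiplicity $2$: one must rule out configurations that are hyperbolic (rather than parabolic) after adjoining $C_0$, and argue that one can always retreat to a sub-configuration that is genuinely parabolic and of Kodaira type. Second, the case-by-case verification that each admissible attachment to an $A_n$, $D_n$, or $E_n$ diagram yields an affine diagram (and not merely an abstract parabolic lattice with no Kodaira interpretation) is the combinatorially heavy step; this is precisely the ``case-by-case analysis using a maximal elliptic subconfiguration'' the paper alludes to. Once a parabolic sub-configuration is found, one reduces to the already-settled parabolic case, completing the proof.
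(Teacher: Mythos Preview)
Your proposal is correct and follows essentially the same approach as the paper, which only offers the single sentence ``A case-by-case analysis using a maximal elliptic subconfiguration of $\Gamma$ reveals'' before stating the lemma (deferring the details to \cite{RS24}). You have accurately unpacked this hint: the parabolic case is immediate from Lemma~\ref{lem:ortho}(ii), and in the hyperbolic case one adjoins a vertex $C_0$ to a maximal elliptic subgraph $\Gamma_0$ and uses the constraints \eqref{eq:02}, \eqref{eq:<=2} to force an affine Dynkin subdiagram; your identification of the two obstacles (multiple attachments, and the need to retreat from a hyperbolic to a parabolic subconfiguration) is precisely where the case-by-case work lies.
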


\begin{proof}
If $\Gamma$ contains a curve $C$ with $C^2=0$, then we're done,
so by \eqref{eq:02} we may assume that $\Gamma$ consists of $(-2)$-curves.
Thus we may take a maximal elliptic subconfiguration $\Gamma'$ of $\Gamma$
(an orthogonal sum of Dynkin diagrams)
and attach any curve in $\Gamma \setminus \Gamma'$ to it.
By assumption, the resulting configuration is no longer  elliptic.
We claim that it supports an isotropic vector.
Indeed, using the restriction \eqref{eq:<=2},
verifying the claim amounts to a simple case-by-case analysis starting 
from any Dynkin diagram involved in $\Gamma'$.
\end{proof}

We continue by explaining how
Lemma~\ref{lem:Kodaira_type} combined  with Corollary \ref{cor:para},
gives way to an ineffective proof of Theorem \ref{thm}
(i.e.\ for $H^2\gg 0$ -- we will use this later for Proposition \ref{prop:p=2}). 
This builds on the concept of instrinsic polarization in the terminology of \cite{degt}.
For any hyperbolic subgraph $\Gamma'\subset\Gamma$, 
the instrinsic polarization is defined as 
the $\QQ$-divisor $H_{\Gamma'}\in\QQ\Gamma'$ determined 
(if it exists, otherwise we get a contradiction) by the degree conditions
\begin{equation} \label{eq-defHintrinsic}
C.H_{\Gamma'} = C.H \;\;\; \forall\, C\in\Gamma'.
\end{equation}
As in  \cite[Prop.\ 6.2]{RS24}, the polarization $H$ on $Y$ crucially satisfies the condition
\begin{eqnarray}
\label{eq:intrinsic}
H^2 \leq H_{\Gamma'}^2.
\end{eqnarray}

\begin{lemm}
\label{lem:para}
If $\#\Gamma>9$ and $H^2\gg 0$,
then $\Gamma$ is parabolic.
\end{lemm}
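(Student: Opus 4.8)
The plan is to proceed by contradiction and rule out the hyperbolic case. Since $\#\Gamma>9$, Corollary~\ref{cor:para}(i) shows that $M$ cannot be elliptic, so $\Gamma$ is either parabolic or hyperbolic; I would assume $\Gamma$ is hyperbolic and derive a contradiction once $H^2$ is large enough (in terms of $d$). The only input I need is the intrinsic polarization: for any subgraph $\Gamma'\subset\Gamma$ whose Gram matrix is nondegenerate, the divisor $H_{\Gamma'}$ of \eqref{eq-defHintrinsic} exists, and \eqref{eq:intrinsic} yields $H^2\leq H_{\Gamma'}^2$. The whole strategy is therefore to exhibit, inside a hyperbolic $\Gamma$, a nondegenerate hyperbolic subgraph $\Gamma'$ whose self-intersection $H_{\Gamma'}^2$ is bounded by a constant depending only on $d$.

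First I would locate the positive direction. Because the isotropic vertices of $\Gamma$ are pairwise orthogonal, they span a totally isotropic subspace, so any positive-definite direction must involve a $(-2)$-curve. \emph{Case 1:} some isotropic vertex $C$ fails to be orthogonal to $\Gamma$, say $C.C'\neq 0$ for some $C'\in\Gamma$. Then $C'$ is a $(-2)$-curve (isotropic vertices meet only $(-2)$-curves), and $\Gamma'=\{C,C'\}$ spans a rank-two lattice of determinant $-a^2<0$ with $a=C.C'\geq 1$, hence nondegenerate of signature $(1,1)$. A direct rank-two computation gives $H_{\Gamma'}^2=2(C.H)(C'.H)/a+2(C.H)^2/a^2\leq 4d^2$ from $C.H,C'.H\leq d$ and $a\geq 1$, so \eqref{eq:intrinsic} already contradicts \eqref{eq:H^2}.

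\emph{Case 2:} every isotropic vertex is orthogonal to all of $\Gamma$ and thus lies in $\ker M$. Then the Gram matrix of $\Gamma$ is the direct sum of a zero block on the isotropic vertices and the Gram matrix of the $(-2)$-curves, so the latter alone carries the positive eigenvalue and is hyperbolic. I would take for $\Gamma'$ a subset of the $(-2)$-curves whose Gram matrix $G$ is a nonsingular principal submatrix of maximal size; the standard fact that such a submatrix inherits the signature of the whole form guarantees that $\Gamma'$ is nondegenerate of signature $(1,\#\Gamma'-1)$. Now all data are uniformly controlled: the entries of $G$ equal $-2$ on the diagonal and lie in $\{0,1,2\}$ off it by \eqref{eq:<=2}, while $\#\Gamma'\leq\rank\Num(Y)=10$ by the signature $(1,9)$ of $\Num(Y)$ and the degrees satisfy $C.H\leq d$. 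Hence $G$ and the degree vector $\ud=(C.H)_{C\in\Gamma'}$ range over a finite set, so $H_{\Gamma'}^2=\ud^{\,T}G^{-1}\ud$ is bounded by some $c(d)$. Taking $H^2>\max\{12d^2,c(d)\}$ contradicts \eqref{eq:intrinsic}, and we conclude that $\Gamma$ is parabolic.

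The main obstacle, and the reason the two cases cannot be merged into a single finiteness argument, is that the intersection number between an isotropic vertex and a $(-2)$-curve is not a priori bounded, so the crude counting of Case~2 does not apply to configurations with ``active'' isotropic vertices. The key observation that resolves this is that such an intersection is not an obstruction at all: it already produces a rank-two hyperbolic lattice whose intrinsic polarization is forced to be small, and a large intersection number only makes the resulting bound smaller, sharpening the contradiction.
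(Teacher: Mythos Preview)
Your proof is correct and takes a somewhat different route from the paper's. The paper invokes Lemma~\ref{lem:Kodaira_type} to produce a divisor of Kodaira type supported on $\Gamma$; this gives a parabolic subgraph $\Gamma_0$ which, together with a single extra curve $C\in\Gamma$, yields a hyperbolic $\Gamma'=\Gamma_0\cup\{C\}$. Finiteness of the possible Kodaira types (bounded via $e(Y)=12$) together with Hodge index control on the intersection numbers of $C$ with $\Gamma_0$ then bounds $H_{\Gamma'}^2$. Your argument bypasses Lemma~\ref{lem:Kodaira_type} entirely: you separate off the case of an isotropic vertex meeting a $(-2)$-curve (where a direct rank-two computation already gives $H^2\leq 4d^2$, well under \eqref{eq:H^2}), and otherwise work with a maximal nondegenerate principal submatrix of the Gram form on the $(-2)$-curves, whose size is bounded by $\rank\Num(Y)=10$ and whose off-diagonal entries are bounded by \eqref{eq:<=2}. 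Your route is more elementary in that it avoids the case-by-case analysis underlying Lemma~\ref{lem:Kodaira_type}; the paper's route, on the other hand, pins down the shape of $\Gamma'$ as ``parabolic plus one curve'', which is exactly the structure exploited in the effective arguments of the subsequent sections.
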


\begin{proof}
By Corollary \ref{cor:para} (i), $\Gamma$ cannot be elliptic,
so we assume it to be hyperbolic.
By Lemma \ref{lem:Kodaira_type},
$\Gamma$ contains a parabolic subgraph $\Gamma_0$
which may be extended by a single curve $C\in\Gamma$ 
to a hyperbolic subgraph $\Gamma'\subset\Gamma$.
Presently, there are only finitely many possible configurations for $\Gamma_0$;
by the above restrictions \eqref{eq:02}--\eqref{eq:<=2},
the same holds for $\Gamma'$. Finally, by  \eqref{eq:intrinsic},
the maximum of all self-intersections $H_{\Gamma'}^2$ gives an upper bound for $H^2$.
\end{proof}

\begin{proof}[Ineffective proof of Theorem \ref{thm}]
If $\#\Gamma>9$ and $H^2\gg 0$,
then  $\Gamma$ is parabolic by Lemma \ref{lem:para}.
Thus Corollary \ref{cor:para} (ii) shows that $\#\Gamma\leq 12$.
\end{proof}

\begin{rem} \label{rem:inverse-gram}
To obtain a lower bound for $H^2$ to rule out $\Gamma$ being hyperbolic,
we will refine the idea that 
appeared already in the proof of Lemma~\ref{lem:para} (see also \cite{RS24}). 

We  assume that $\Gamma$ is hyperbolic. 
We choose  a basis $\Gamma_0$ of $\Gamma/\ker$  
with Gram matrix  $G$. 
The intrinsic polarization can be expressed as
\[
H_0=G^{-1}\vec d \, , 
\]
where the coordinates of $\vec d$ are the degrees of the elements of the basis $\Gamma_0$. Thus 
estimating $H_0^2$ reduces to an optimization problem. But all degrees are positive, so we obtain an easy  bound
in terms of the entries $g_{ij}$ of $G^{-1}$ by
\begin{eqnarray}
\label{eq:H_0^2}
H_0^2 \leq \sum_{i,j} \max(0,g_{ij}) d^2.
\end{eqnarray}
This bound is attained when all $g_{ij}$ are non-negative by all curves in $\Gamma_0$ having degree $d$.
In the presence of negative entries, 
the bound can be  improved by arranging for a  decomposition
$G^{-1} = G_{0} + G_{+}$, with $G_{+}$ negative semi-definite 
containing in its kernel the vector with all entries the same
(cf.\  \cite[Lemma~9.1]{RS24}).
 \end{rem}


\section{Hyperbolic case -- preparations} \label{hyp-prep}

In view of Corollary \ref{cor:para}, it remains to study the case when $M$ is hyperbolic 
in order to work out an effective
 proof of Theorem \ref{thm}. 

In this section we maintain the assumption \eqref{eq:H^2}.
As in \cite{RS24}, for a divisor $D= \sum_{i} n_{i} C_{i}$ of Kodaira type,  we define the weight by the equality $\mbox{wt}(D) = \sum_i n_{i}$.
In particular, if $D$ is supported on $\Gamma$, we have $\deg(D)\leq \wt(D)d$.


\begin{lemm}
\label{lem:types}
Assume that $\Gamma$ is hyperbolic.
Then 
\begin{eqnarray}
\label{eq:C}
C^2=-2\;\; \;\forall\, C\in\Gamma,  \;\; \text{ and } \;\;
C.C'\leq 1 \;\;\; \forall\,C, C'\in\Gamma.
\end{eqnarray}
In particular, there are no divisors of Kodaira type $\IK_1, \IK_2, \II, \III, \IV$ supported on $\Gamma$. 
Moreover, if a divisor of type $\IK_3$ or $\IK_4$ is supported on $\Gamma$,
then it is a half-pencil of a genus one fibration such that $\Gamma$ contains none of its  multisections  of index $>2$. 
\end{lemm}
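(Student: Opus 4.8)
The plan is to run the entire lemma on a single Hodge-index estimate, applied over and over. Suppose $f$ is a nonzero effective isotropic class supported on $\Gamma$ and $C''\in\Gamma$ is a $(-2)$-curve with $f.C''=k\ge 1$. Since $\Num(Y)$ has signature $(1,9)$ and $H^2>0$, the Gram matrix of $f,C'',H$ has exactly one positive eigenvalue, hence non-negative determinant; expanding it gives
\[
0\le -k^2H^2+2k\,(f.H)(C''.H)+2(f.H)^2 .
\]
Writing $f.H\le wd$ and using $C''.H\le d$, assumption \eqref{eq:H^2} turns this into $6k^2<kw+w^2$, that is,
\[
k<\tfrac{w}{2}.
\]
The second, softer ingredient is that a nef isotropic class $f$ supported on the hyperbolic $\Gamma$ always admits a multisection in $\Gamma$: otherwise $\Gamma\subset f^{\perp}$, which is negative semidefinite because $f$ lies on the light cone of $\Num(Y)$, contradicting hyperbolicity. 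So some $C''\in\Gamma$ satisfies $f.C''\ge 1$.

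To establish \eqref{eq:C} I combine the two. If some $C\in\Gamma$ were isotropic it would be a nef isotropic curve with $C.H\le d$; taking $f=C$ (so $w=1$), its multisection is necessarily a $(-2)$-curve (two isotropic classes in $\Gamma$ are orthogonal, see the lines following \eqref{eq:02}), whence $k<\tfrac12$ — absurd. Hence $C^2=-2$ for all $C\in\Gamma$. By \eqref{eq:<=2} any two of them satisfy $C.C'\le 2$; if equality held then $f:=C+C'$ would be a nef isotropic class with $f.H\le 2d$, so ($w=2$) its multisection would force $k<1$, again absurd. This proves \eqref{eq:C}, and with it the non-existence of the types requiring an isotropic curve ($\IK_1,\II$) or a double point ($\IK_2,\III$).

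Type $\IV$ and the ``moreover'' part I route through genus one fibrations. By \eqref{eq:C} a divisor $D$ of type $\IK_3,\IV$ (weight $3$) or $\IK_4$ (weight $4$) supported on $\Gamma$ is a nef isotropic $\tilde A$-configuration and defines a genus one fibration as in \eqref{eq:fibr}. I first exclude that $D$ is a non-multiple fibre: in that case $[D]=2f$ for a half-pencil $f$ with $f.H=\tfrac12 D.H\le\tfrac12\wt(D)d\le 2d$, so the estimate ($w=2$) gives $k<1$ for the multisection granted by hyperbolicity — impossible. Thus $D$ is a half-pencil. Since the K3 cover is smooth, every half-pencil is of multiplicative or smooth type $\IK_n$ (cf.\ \cite{Cossec-Dolgachev-Liedtke}); as $\IV$ is additive this rules it out completely, finishing the ``in particular''. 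Finally, taking $f=D$ with $w=\wt(D)\in\{3,4\}$ yields $D.C''<\wt(D)/2\le 2$ for every multisection, hence $D.C''=1$, i.e.\ $\Gamma$ contains no multisection of index $>2$.

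The one genuinely delicate point is type $\IV$: numerically a concurrent triple of $(-2)$-curves is indistinguishable from a triangle $\IK_3$, so \eqref{eq:C} alone cannot separate them, and the exclusion rests on the half-pencil dichotomy above combined with the structural fact that half-pencils of genus one fibrations on Enriques surfaces with smooth K3 cover are never additive. Pinning down this last input uniformly — in particular in characteristic $2$ with smooth K3 cover — is where I expect the real work to lie; once it is in place, the numerical estimates above are routine.
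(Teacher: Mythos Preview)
Your proof is correct and takes essentially the same approach as the paper: both hinge on the single Hodge-index inequality $6k^2<kw+w^2$ (the paper packages it as $H^2\le H_L^2\le 2\wt(D)(r+\wt(D))d^2/r^2$, which with $k=r$, $w=\wt(D)$ is literally the same bound), and both exclude type $\IV$ by invoking that multiple fibres on such Enriques surfaces are multiplicative. The only cosmetic difference is in the half-pencil step for $\IK_3,\IK_4$: the paper reads off $r=1$ and uses that the full fibre class is $2$-divisible in $\Num(Y)$, while you instead apply the estimate once more to the half-pencil class $f=\tfrac12 D$ to force a contradiction if $D$ were a full fibre.
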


\begin{proof} Let  $D$ be a divisor of Kodaira type, such that all components of its support belong to $\Gamma$.
Since $\Gamma$ is hyperbolic,  there is a curve $C\in\Gamma$ serving as a multisection of 
the genus one fibration \eqref{eq:fibr}.

We let $r:=C.D$; note that, for a general fibre $F$, we have
$C.F=r$ or $2r$, depending on whether $D$ is a fiber or a half-pencil.
Consider the sublattice 
$$
L = \langle C,D\rangle\subset\Num(Y).
$$
We consider the intrinsic polarisation
\[
H_L\in L\otimes\QQ \;\;\; \text{ determined by  } \;\; C.H_L = C.H, \;\; D.H_L = D.H,
\]
that satisfies the inequality $H^2 \leq H_L^2$.
Solving the above linear system for $H_L$, we use \eqref{eq:H^2} to obtain the following inequalities:
\begin{equation} \label{eq-useful-ineq}
12d^2 < H_L^2 \leq 2\deg(D) (r\deg(C)+\deg(D))/r^2 \leq 2\wt(D) (r+\wt(D))d^2/r^2.
\end{equation}
This readily implies that $\wt(D)>2$, ruling out fibres of Kodaira type $\IK_1, \IK_2, \II, \III$
to be supported on $\Gamma$. In particular, combined with \eqref{eq:02}, it shows that any curve in $\Gamma$ is nodal (i.e.\ smooth rational). 
Furthermore,  $C.C' \neq 2$ for any $C, C' \in \Gamma$ , so \eqref{eq:C} follows from \eqref{eq:<=2}. 

Finally, if $D$ has weight at most $4$ (or more generally degree at most $4d$), 
then \eqref{eq-useful-ineq} yields $r=1$, making $D$ a half-pencil 
and any multisection $C\in\Gamma$ a bisection.
This applies to Kodaira types $\IK_3, \IK_4$ and rules out $\IV$
(since multiple fibres are multiplicative).
\end{proof}

In \cite{Kondo}, Kondo pioneered a construction of (complex) Enriques surfaces
by using suitable disjoint sections on elliptic K3 surfaces.
This was later generalized in \cite{HS} as we shall use in Section \ref{ss:all_d}.
Here we note the following useful consequence:

\begin{cor}
If $\Gamma$ is hyperbolic and supports a divisor of Kodaira type $\IK_3$ or $\IK_4$, then
$Y$ arises from Kondo's construction.
\end{cor}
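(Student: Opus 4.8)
The plan is to descend the hypothesis to the K3 cover and recognise there the data underlying Kondo's construction. By Lemma~\ref{lem:types}, the divisor $D$ of type $\IK_3$ or $\IK_4$ supported on $\Gamma$ is a half-pencil of a genus one fibration $f\colon Y\to\PP^1$, and none of its multisections in $\Gamma$ has index $>2$. Since $\Gamma$ is hyperbolic it is not supported on fibre components alone, so (as in the proof of Lemma~\ref{lem:types}) there is a curve $C\in\Gamma$ serving as a multisection of $f$; by that lemma $C$ is a bisection, meeting the half-pencil in a single point, $C.D=1$, and by \eqref{eq:C} it is smooth rational with $C^2=-2$.

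First I would pass to the smooth K3 cover $\pi\colon X\to Y$. The free Enriques involution $\sigma$ with $Y=X/\sigma$ makes $\pi$ \'etale, and $f$ induces an elliptic fibration $\tilde f\colon X\to\PP^1$ whose fibre class is $\pi^{*}D$; the base map is the double cover $\PP^1\to\PP^1$ branched over the two multiple fibres of $f$, with $\sigma$ covering the deck involution. By the standard behaviour of multiple fibres under the K3 cover (cf.\ \cite{Cossec-Dolgachev-Liedtke}), the half-pencil $D$ of type $\IK_n$ ($n=3,4$) lifts to a fibre of type $\IK_{2n}$ of $\tilde f$ over a fixed point of the base involution. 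Because $\pi$ is \'etale while $C\cong\PP^1$ is simply connected, the preimage $\pi^{-1}(C)$ cannot be a connected double cover of $C$; hence it splits as a disjoint union $\tilde C_1\sqcup\tilde C_2$ of two copies of $\PP^1$ interchanged by $\sigma$. The relation $C.D=1$, together with $\pi^{*}D$ being the fibre class, then forces each $\tilde C_i$ to be a section of $\tilde f$. In particular $\tilde f$ is Jacobian and carries two disjoint sections swapped by $\sigma$.

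At this stage the data assembled on $X$ — an elliptic K3 surface with a fibre of type $\IK_6$ or $\IK_8$, two disjoint sections, and a fixed-point-free involution $\sigma$ acting through the nontrivial base involution and interchanging these sections — is precisely the input of the (generalized) construction of \cite{Kondo,HS}. I would conclude that $Y=X/\sigma$ arises from Kondo's construction, matching the setup used later in Section~\ref{ss:all_d}.

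The main obstacle I anticipate lies in this last identification: one must verify that the covering involution $\sigma$ is exactly of the prescribed shape, namely fibre-preserving, acting on the base as the involution fixing the two points beneath the doubled half-fibres, and interchanging the two disjoint sections without fixed points. The splitting argument supplies the disjoint sections and the Jacobian structure, while the branched base change supplies the base involution and the $\IK_{2n}$ fibre; what remains is to match these against the precise normalisation in \cite{HS} and to confirm that no fixed points arise, which is guaranteed by the freeness of the Enriques involution.
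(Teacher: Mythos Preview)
Your proposal is correct and follows essentially the same approach as the paper: use Lemma~\ref{lem:types} to force the multisection in $\Gamma$ to be a smooth rational bisection, then observe that it splits on the K3 cover into two disjoint sections interchanged by the Enriques involution, which is precisely the input of Kondo's construction. The paper's own proof is terser---it simply asserts the splitting and defers the final identification to \cite{S-Q-hom}, \cite{S-Q_l}, \cite{S-Q_2} rather than \cite{Kondo,HS}---whereas you spell out the simple-connectedness argument and the $\IK_{2n}$ fibre on the cover; but the underlying idea is the same.
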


\begin{proof}
We have just seen that any multisection in $\Gamma$ is a bisection.
This splits on the K3 cover into two disjoint sections.
Hence the reasoning from \cite{S-Q-hom}, \cite{S-Q_l}, \cite{S-Q_2} (or \cite{Martin}) applies
to show that, independent of the characteristic, $Y$ arises from Kondo's construction.
\end{proof}

\section{Hyperbolic case -- reduction}

The following lemma  will substantially simplify our analysis of possible hyperbolic graphs $\Gamma$.
As stated before, in this section, we continue to make the assumption  \eqref{eq:H^2}, so we can apply the results from the previous two sections.

\begin{lemm}
\label{lem:red1}
If $\Gamma$ is hyperbolic, then it supports a divisor $D$ of Kodaira type $\IK_n \,(n\geq 3)$
which is a half-pencil of a genus one fibration on $Y$. Moreover, one can choose $D$ such that 
all multisections of $|2D|$ that belong to $\Gamma$ are bisections.
\end{lemm}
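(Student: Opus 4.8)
The plan is to reduce everything to a single minimal cycle of $(-2)$-curves and to exploit a cutting operation together with the absence of sections. By Lemma~\ref{lem:Kodaira_type} the hyperbolic graph $\Gamma$ supports a divisor of Kodaira type, and by Lemma~\ref{lem:types} its type is necessarily $\IK_n$ $(n\geq 3)$ or one of the additive extended Dynkin types $\tilde D$, $\tilde E$ (recall that all curves are $(-2)$-curves with $C.C'\leq 1$ by \eqref{eq:C}). The first task is to guarantee that $\Gamma$ carries at least one genuine $\IK$-cycle. If the divisor is already of type $\IK_n$ there is nothing to do; otherwise it is of type $\tilde D$ or $\tilde E$, and since the multiple fibres of a genus one fibration on $Y$ are of multiplicative type (under our characteristic assumptions), such a configuration must be a full fibre $F$. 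Hyperbolicity of $\Gamma$ forces a horizontal curve $C\in\Gamma$ with $C.F\geq 1$, and as $Y$ carries no section this index is at least $2$, so $C$ meets at least two components of $F$; joining $C$ to a minimal path between two of the components it meets produces a chordless cycle of $(-2)$-curves, i.e.\ an $\IK$-cycle supported on $\Gamma$.

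Having secured the existence of an $\IK$-cycle, I would then choose among all $\IK_n$-cycles supported on $\Gamma$ one of minimal length $n$; by Lemma~\ref{lem:types} we have $n\geq 3$. The heart of the argument is the cutting operation: if $D=C_1+\dots+C_n$ is such a cycle and some $C\in\Gamma$ satisfies $r:=C.D\geq 2$, then $C$ meets $r\geq 2$ distinct components of $D$; choosing two cyclically consecutive attachment points $C_a,C_b$ (so that the arc between them carries no further intersection with $C$) yields a chordless $\IK$-cycle $C+C_a+\dots+C_b$ of length $g+2$, where $g$ is the number of edges in that arc. Since the $r$ arcs have total length $n$, the shortest one satisfies $g\leq n/r\leq n/2$, so for $n\geq 5$ the new cycle has length $g+2<n$, contradicting minimality.

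Consequently, for $n\geq 5$ the minimal cycle $D$ admits no $C\in\Gamma$ with $C.D\geq 2$. Combined with hyperbolicity---which, $D$ being isotropic, produces some $C\in\Gamma$ with $C.D\geq 1$, for otherwise $\Gamma\subset D^\perp$ would be negative semi-definite---this forces $C.D=1$ for every horizontal curve. As $Y$ has no section, $D$ cannot be a full fibre, hence it is a half-pencil and every multisection of $|2D|$ in $\Gamma$ is a bisection. For the remaining cases $n\in\{3,4\}$ the same two conclusions are delivered directly by the final statement of Lemma~\ref{lem:types}. In either case $D$ is the desired $\IK_n$ half-pencil with the bisection property.

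I expect the main obstacle to be the bookkeeping in the cutting step: one must verify that the extracted configuration is an honest Kodaira $\IK_\ell$-cycle (no chords, all pairwise intersections $\leq 1$, and $\ell\geq 3$) and that its length drops strictly, which is precisely why the estimate $g\leq n/2$ and the restriction to $n\geq 5$ are needed, with the small cases $n\in\{3,4\}$ necessarily handed off to Lemma~\ref{lem:types}. A secondary point requiring care is the input that additive half-pencils do not occur, which is what allows the $\tilde D$, $\tilde E$ configurations to be treated as full fibres admitting a genuine multisection to cut.
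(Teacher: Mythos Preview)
Your overall strategy---find an $\IK$-cycle, choose one of minimal length, and use the cutting operation to force $C.D\leq 1$ for all $C\in\Gamma$---matches the paper's approach closely, and your handling of the multiplicative case is essentially the same as theirs. There is, however, a genuine gap in your treatment of the additive types $\tilde D$, $\tilde E$.

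You assert that since $Y$ carries no section, the index $C.F$ is at least $2$, ``so $C$ meets at least two components of $F$''. This implication fails precisely for additive fibres: in a fibre of type $\IK_m^*$, $\IV^*$, $\III^*$, or $\II^*$ the components occur with multiplicities up to $2,3,4,6$ respectively, and since \eqref{eq:C} only gives $C.\Theta\leq 1$, a bisection $C$ with $C.F=2$ could perfectly well meet a single double component once. In that situation you get no cycle at all. The paper closes this gap case by case: for $\IK_m^*$ it passes to the K3 cover, where $C$ splits into two disjoint sections neither of which can meet a double component of the corresponding $\IK_m^*$ fibre; for $\IV^*$ it uses that $C.F$ is even (so $C$ cannot hit only the triple component); for $\III^*$ and $\II^*$ it observes that hitting only the highest-multiplicity component already forces a $\IK_0^*$ subconfiguration on $\Gamma$, reducing to the $\IK_m^*$ case. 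Your argument needs some version of this analysis before the minimal-cycle machinery can take over.
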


\begin{proof}
If $\Gamma$ supports a half-pencil, then it has the claimed type,
so let us just assume to the contrary that $\Gamma$ supports no half-pencil.

By Lemma \ref{lem:Kodaira_type}, there is a divisor $D$ of Kodaira type supported on $\Gamma$.
Consider $D$ together with a multisection $C\in\Gamma$ as in \S\ref{hyp-prep}.
Since $D$ is not a half-pencil, we have $C.D\geq 2$.
Recall from \eqref{eq:C} that $\Theta.C\leq 1$ for any component $\Theta$ of $D$ and the multisection $C$ is nodal.

If $D$ has type $\IK_m^*$, then either $C$ meets at least two components of $D$,
so there is a cycle (type $\IK_n$) supported on $\Gamma$, or it meets just a single double fibre component.
But then the multisection meets the fiber component transversally, so $C$ must be a bisection. 
Moreover, on the K3 cover $C$ splits into two (disjoint) sections, none of which may meet
the double component of an $\IK_m^*$ fiber of the fibration induced on the K3 cover, contradiction.

The same reasoning applies to $D$ of Kodaira type $\IV^*$. Indeed, the intersection $D.C$ must be even, so 
$C$ cannot meet $D$ in exactly one point on the triple component.

For type $\III^*$, the multisection $C$ could a priori also meet just the 4-fold fibre component $\Theta$,
but then $\Gamma$ would also support a divisor of type $\IK_0^*$, and we conclude as before.
An analogous argument applies to  type $\II^*$.

Finally, if there is a divisor of type $\IK_n$ supported on $\Gamma$ which is not a half-pencil,
then it connects with $C$ through at least two distinct fibre components;
this yields a cycle of length at most $\lfloor\frac n2\rfloor+2$.
Iterating this procedure as long we  do not get a half-pencil, 
we obtain a divisor of type $\IK_3$ or $\IK_4$
supported on $\Gamma$.
This is a half-pencil by Lemma \ref{lem:types}, contradiction, which completes the proof of the first claim of the lemma.

Let $D$ be a divisor of Kodaira type $\IK_n$ $(3 \leq n \leq 9)$ supported on
$\Gamma$ such that $|2D|$ induces a genus one fibration. 
If  $\Gamma$ contained a multisection of $|2D|$ of index $> 2$, then the latter could be used to produce a cycle of type $\IK_3$ or $\IK_4$ with all components in $\Gamma$.
By Lemma \ref{lem:types} $\Gamma$ contains only bisections of the fibration such a cycle defines, so we can find a divisor $D$ of Kodaira type $\IK_n$ supported on $\Gamma$  
such that all multisections in $\Gamma$ are in fact bisections of $|2D|$.
\end{proof}


\section{Hyperbolic case with at most 3 bisections}
\label{ss:3}

Let $\#\Gamma>12$.
In this section we rule out the possibility that $\Gamma$ contains at most three bisections of the genus one fibration $|2D|$
(that exists by Lemma~\ref{lem:red1}) as soon as  \eqref{eq:H^2} holds.

By Cor.~\ref{cor:para} the graph $\Gamma$ is hyperbolic.
If there are at most three bisections of $|2D|$  in $\Gamma$,
then there are at least ten fibre components supported on $\Gamma$.
As soon as  \eqref{eq:H^2} holds, this turns out to be very restrictive, since fibers of type $\IK_1, \IK_2$ cannot be supported on $\Gamma$
by Lemma \ref{lem:types}
and types $\IK_3, \IK_4$ are automatically multiple. 

Recall that the Jacobian fibration of $|2D|$ is a rational elliptic surface.
Naturally it shares the  same singular fibers with $Y$,
except that on $\WW$,
 smooth or semi-stable fibers (Kodaira type $I_n, n\geq 0$) may come with multiplicity two.
The classification of rational elliptic surfaces of \cite{OS} 
 and Lemma~\ref{lem:types} then give the  six configurations \ref{conf-2I5} -- \ref{ss:VII} below,
each with exactly 3 bisections supported on $\Gamma$.

Let $B \in \Gamma$ be a bisection of $|2D|$. Since $B$ is a nodal curve, it  splits into two disjoint sections $O, P$
of the induced elliptic fibration on the  K3 cover $X$
and one can use the theory of Mordell--Weil lattices \cite{MWL},
and the fact that $B$ meets no curve in $\Gamma$ with multiplicity greater than one,
to compute (an upper bound for) the height of $P$.
We also use the classification of extremal rational elliptic surfaces in \cite{Lang1}, \cite{Lang2}
and the geometry  of Enriques surfaces with finite automorphism groups analyzed in \cite{Kondo}, \cite{Martin}.

\subsection{
Two $\IK_5$} \label{conf-2I5}

In this case we have
$h(P)\leq 4-\frac 52 - 2\cdot\frac 45<0$, contradiction.

\subsection{
$\IK_4$ and $\IK_1^*$} \label{conf-I5I1s}

We obtain
$h(P)\leq 4-2-2\cdot 1 =0$, so $P$ necessarily is $2$-torsion,
and $Y$ has Kondo's type II with exactly 12 smooth rational curves
by \cite{Kondo}, \cite{Martin}, so $\#\Gamma\leq 12$, contradiction.


\subsection{ 
$\IK_3$ and $\IV^*$} \label{conf-I3IVs}

Here $h(P)=4-\frac 32-2\cdot\frac 43<0$, contradiction.

\subsection{
Two $\IK_3$ and two $A_2$ embedding into $\IK_3$ each} \label{conf-4I3s}
(outside characteristic $2$, since then there can only be one multiple fibre,
and characteristic $3$, because the fibration ceases to exist there)

In this case $h(P)= 4-2\cdot\frac 32=1$, 
i.e.\ $B$ meets the components of the unramified $\IK_3$ fibres not contained in $\Gamma$,
since other configurations would cause $h(P)<0$.
It follows that the K3 cover has $\rho(X)>20$, so $\rho(X)=22$ by \cite{Artin}.
But then the lattice $L$ generated by fibre components, torsion sections and $P$ embeds into some
supersingular K3 lattice $\Lambda_{p,\sigma}$ which is $p$-elementary.
However, $L$ is primitive (since else there would be a section $Q$ of height $h(Q)=h(P)/9=1/9$
which is impossible with the given fibre types),
and one computes that $L^\vee/L$ contains a subgroup isomorphic to $(\ZZ/3\ZZ)^2$.
This prevents $L$ from embedding into $\Lambda_{p,\sigma}$
(cf.\ \cite[Thm.\ 6.1]{KS}).


\subsection{
Two $\IK_4$ and two $A_1$ embedding into $\IK_2$ each} \label{conf-2I42I2}
(outside characteristic $2$, since this surface ceases to exist there)

We have $h(P)\leq 4-2\cdot 2=0$, so $P$ is $2$-torsion
and $Y$ has Kondo's type III.
This exists outside characteristics $2,5$
with exactly 20 smooth rational curves
by \cite{Kondo}, \cite{Martin}.
Fixing two $\IK_4$ fibres and a bisection $B$ to be contained in $\Gamma$,
$B$ determines the precise components $\Theta, \Theta'\in\Gamma$ of the $\IK_2$ fibres, 
since it meets the respective other component with multiplicity two
(which is thus not contained in $\Gamma$ by Lemma \ref{lem:types}).
But then all other multisections are ruled out, again by Lemma \ref{lem:types}, 
since each meets $\Theta$ or $\Theta'$ with multiplicity two,
so $\# \Gamma=11$, contradiction.


\subsection{
$\IK_3, \IK_6$ and $A_1$ embedding into $\IK_2$ (or into $\III$ in characteristic $3$)}
\label{ss:VII}
(again outside characteristic $2$, since there $\IK_3$ degenerates into $\IV$
which cannot be multiple)

In this case,  we have $h(P)\leq 4-\frac 32-2\cdot \frac 56 = \frac 56$, so this case is not ruled out as the other ones.
However, any other configuration would yield $h(P)<0$,
so the configuration is unique, and we deduce that $Y$ is of Kondo's type VII.

\vspace*{2ex}
Until now, we have shown the following lemma.
\begin{lemm}
\label{lem:red1-3sect}
Let  $\#\Gamma>12$ and $H^2 >12d^2$.   If $\Gamma$ contains at least 10 components of singular fibers of the genus one fibration $|2D|$ (cf. Lemma~\ref{lem:red1}),
 then it contains exactly 10 components and they  
 form   the configuration {\rm \ref{ss:VII}}. 
\end{lemm}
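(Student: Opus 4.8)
The plan is to assemble the statement from the six case analyses \ref{conf-2I5}--\ref{ss:VII} carried out above. The starting point is Corollary \ref{cor:para}, which forces $\Gamma$ to be hyperbolic once $\#\Gamma>12$, and Lemma \ref{lem:red1}, which supplies the half-pencil $D$ of Kodaira type $\IK_n$ $(n\geq 3)$ such that $|2D|$ is a genus one fibration whose multisections in $\Gamma$ are all bisections. The core reduction is to show that the hypothesis that $\Gamma$ contains at least $10$ fibre components of $|2D|$ pins the Jacobian rational elliptic surface down to one of the six configurations listed. For this I would invoke the Oguiso--Shioda classification \cite{OS} of rational elliptic surfaces together with Lemma \ref{lem:types}: fibres of type $\IK_1,\IK_2$ cannot be supported on $\Gamma$, while $\IK_3,\IK_4$ are automatically multiple. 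Running through the classification and retaining only those surfaces that can carry at least $10$ fibre components in $\Gamma$ (and hence, as one checks, exactly three bisections) leaves precisely the finite list \ref{conf-2I5}--\ref{ss:VII}, each of which in fact realizes exactly ten such components.

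Next I would eliminate every configuration but \ref{ss:VII}. In each case one fixes a bisection $B\in\Gamma$, which splits on the smooth K3 cover into two disjoint sections $O,P$, and bounds the Mordell--Weil height $h(P)$ using \cite{MWL} and the fact that $B$ meets every curve of $\Gamma$ with multiplicity at most one. This is exactly the computation already performed: configurations \ref{conf-2I5} and \ref{conf-I3IVs} give $h(P)<0$, an immediate contradiction; configurations \ref{conf-I5I1s} and \ref{conf-2I42I2} force $P$ to be $2$-torsion and identify $Y$ with Kondo's type II, resp.\ type III, which carry only $12$, resp.\ $11$, relevant rational curves, contradicting $\#\Gamma>12$; and configuration \ref{conf-4I3s} is excluded by the lattice-theoretic observation that the associated Mordell--Weil lattice cannot embed into any $p$-elementary supersingular K3 lattice (cf.\ \cite{KS}). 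Only configuration \ref{ss:VII}, where $h(P)\leq \frac{5}{6}$, survives.

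Finally I would count components. In configuration \ref{ss:VII} the fibre components of $|2D|$ lying in $\Gamma$ are the three components of the $\IK_3$ fibre, the six components of the $\IK_6$ fibre, and the single $A_1$-component of the $\IK_2$ fibre, for a total of exactly $3+6+1=10$. Since this is the only surviving configuration, $\Gamma$ contains exactly ten fibre components and they form \ref{ss:VII}, as claimed.

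I expect the main obstacle to be the reduction step of the first paragraph: verifying that the Oguiso--Shioda classification, constrained by Lemma \ref{lem:types} and the Enriques multiplicity conditions, genuinely yields only these six configurations among those with at least ten fibre components, and that each carries exactly three bisections, so that no surface with additional bisections escapes the list. The delicate Mordell--Weil height estimates and especially the supersingular non-embedding argument in \ref{conf-4I3s} are the other technically demanding points, but both have already been dispatched in the preceding subsections, so the proof of the lemma itself amounts to collecting these conclusions.
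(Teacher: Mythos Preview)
Your proposal is correct and follows exactly the paper's approach: the paper's own ``proof'' of this lemma is the single sentence ``Until now, we have shown the following lemma,'' meaning the lemma simply collects the outcomes of the six case analyses \ref{conf-2I5}--\ref{ss:VII}, which is precisely what you do. One small imprecision: the parenthetical ``and hence, as one checks, exactly three bisections'' is not quite right (from $\#\Gamma>12$ and exactly ten fibre components you only get \emph{at least} three bisections), but the case analyses \ref{conf-2I5}--\ref{ss:VII} only require the existence of a single bisection $B$ to run the height computation, so this does not affect the argument.
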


To complete the analysis of the case with 3 or less bisections contained in $\Gamma$
we refrain to an optimization approach.
To this end, we assume that the ten fiber components form  the configuration \ref {ss:VII} and fix a $\QQ$-basis of $\Num(Y)$ 
consisting of a bisection $B$, $\IK_3$ components, five components of $\IK_6$ (only one 
of which meets $B$),
and the $\IK_2$ component off $B$.

\begin{lemm} \label{lem:conf76}
If the above 10 rational curves and the remaining component of $\IK_6$ are all in $\Gamma$,
then $H^2 < 12d^2.$
\end{lemm}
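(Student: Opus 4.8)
The plan is to realize the hypothesis that all eleven listed curves lie in $\Gamma$ as a purely lattice-theoretic constraint and then contradict it via the intrinsic polarization inequality \eqref{eq:intrinsic}, exactly as in Remark~\ref{rem:inverse-gram}. First I would record the configuration precisely: by Lemma~\ref{lem:red1-3sect} the ten fiber components of type $\IK_3,\,\IK_6,\,\IK_2$ together with the three bisections of $|2D|$ in configuration \ref{ss:VII} already account for $\#\Gamma=13$, and the present lemma adds the remaining $\IK_6$ component (so that the full $\IK_6$ cycle is supported on $\Gamma$). With the $\QQ$-basis $\Gamma_0$ fixed above -- a bisection $B$, the two relevant $\IK_3$ components, five of the six $\IK_6$ components, and the $\IK_2$ component off $B$ -- I would write down the Gram matrix $G$ of the intersection form on $\Num(Y)$ restricted to $\Gamma_0$, using \eqref{eq:C} (every curve has square $-2$ and any two meet in at most one point) together with the incidence data of configuration \ref{ss:VII}: the cyclic adjacencies within the $\IK_3$ and $\IK_6$ fibers, and the fact that $B$ meets exactly one component of each fiber and the prescribed $\IK_2$ component.

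Next I would invert $G$ and assemble the intrinsic polarization $H_0 = G^{-1}\vec d$, where $\vec d$ has all entries equal to the degrees of the basis curves. Since $\Gamma_0$ is a basis of the full $\Num(Y)\otimes\QQ$ (it has rank $10$, matching the signature $(1,9)$), the intrinsic polarization $H_0$ is genuinely the projection of $H$ and the inequality \eqref{eq:intrinsic} reads $H^2\leq H_0^2$. The heart of the argument is then to establish the numerical bound $H_0^2 < 12d^2$. Following Remark~\ref{rem:inverse-gram}, I would bound $H_0^2$ via \eqref{eq:H_0^2}, namely $H_0^2\leq\bigl(\sum_{i,j}\max(0,g_{ij})\bigr)d^2$ with $g_{ij}$ the entries of $G^{-1}$; if the naive sum of positive entries already comes out below $12$, we are done immediately. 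Should negative entries make the crude bound too weak, I would sharpen it through a decomposition $G^{-1}=G_0+G_+$ with $G_+$ negative semi-definite having the all-ones vector in its kernel, again as in \cite[Lemma~9.1]{RS24}, which lets one discard the $G_+$ contribution when all basis degrees coincide and trim the estimate for unequal degrees.

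The main obstacle I anticipate is purely computational rather than conceptual: the entry $H_0^2$ must be squeezed strictly below the threshold $12d^2$, and configuration \ref{ss:VII} is the one case in Section~\ref{ss:3} that was \emph{not} already excluded by the Mordell--Weil height estimate (recall $h(P)\leq\tfrac56$ there), so the lattice is close to the boundary and the margin in the inequality may be thin. I would therefore expect to need the refined decomposition of $G^{-1}$ rather than the crude positive-entry sum, and to verify carefully that adding the extra $\IK_6$ component -- which closes the cycle and thereby changes both the rank contribution and the off-diagonal structure of $G$ -- is exactly what pushes $H_0^2$ below $12d^2$. The conclusion $H^2\leq H_0^2<12d^2$ then directly contradicts the standing assumption \eqref{eq:H^2}, proving the lemma.
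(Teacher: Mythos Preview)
There is a genuine gap in how you handle the eleventh curve (the remaining $\IK_6$ component $\Theta_6$). Because the ten basis curves already span $\Num(Y)\otimes\QQ$, the class of $\Theta_6$ is a $\QQ$-linear combination of them; adding it does \emph{not} change the Gram matrix $G$, its rank, or its off-diagonal structure, contrary to what you write (and for the same reason $H_0=H$ exactly here, not merely a projection). The actual content of the hypothesis $\Theta_6\in\Gamma$ is a constraint on the degree vector: from the numerical relation $\IK_6\equiv 2\,\IK_3$ one has
\[
\deg(\Theta_6)=2\deg(\IK_3)-\sum_{i=1}^{5}\deg(\Theta_i),
\]
and requiring $\Theta_6\in\Gamma$ forces this quantity into $(0,d]$. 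That is a genuine linear inequality cutting out a proper slice of the cube $(0,d]^{10}$. Your bound \eqref{eq:H_0^2}, and likewise the $G^{-1}=G_0+G_+$ decomposition, optimizes over the full cube with no such constraint and therefore never uses the lemma's hypothesis at all; the near-sharpness noted immediately after the lemma (Proposition~\ref{prop:13}, where $H^2=12d^2-2$ is attained with the constraint active at its boundary $\deg(\Theta_6)=d$) indicates that the unconstrained problem will not stay below $12d^2$.

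What the paper does instead is a constrained quadratic optimization: maximize $\vec d^{\,T}G^{-1}\vec d$ over $(0,d]^{10}$ subject to the linear side condition coming from $\deg(\Theta_6)\in(0,d]$. The block structure of $G^{-1}$ lets this decouple into two independent subproblems in three and four variables, small enough to settle by checking boundary strata; interlacing the two partial maxima gives a value strictly below $12d^2$. (A minor count: the basis contains all three $\IK_3$ components, not two, so that $1+3+5+1=10$.)
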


\begin{proof}
The above curves generate $\Num(Y)$, so their degrees determine the polarization $H$
and there is no need to consider the intrinsic polarization.
Computing an upper bound for $H^2$
amounts to optimizing the quadratic form given by the inverse of the Gram matrix $G$
over (the $\QQ$-points of) the set $(0,1]^{10}$ (taking into account that $\deg(\IK_6)=2\deg(\IK_3)$).
Thanks to the specific shape of $G^{-1}$, this can be broken down into
two separate optimization problems -- one in 3 variables and one in 4 variables.
These are few enough variables to handle all the boundary components involved.
Interlacing  the two resulting partial upper bounds, one obtains that the quadratic form 
always evaluates smaller than $12$ as desired.
\end{proof}

This completes the analysis of the case with 3 or less bisections contained in $\Gamma$.

\begin{rem}
We will see in Proposition \ref{prop:13}
that Lemma \ref{lem:conf76} is close to sharp in the sense
that the above Enriques surface contains 13 smooth rational curves of degree $\leq d$ 
with respect to a polarization of degree $12d-2$.
\end{rem}

\section{Proof of Theorem~\ref{thm} }

In this section we finally prove Theorem~\ref{thm}.
The proof is preceded by several lemmas.
At first we reduce our analysis of Kodaira divisors to two fibre types only.

\begin{lemm}
\label{lem:red2}
If  
$H^2 >12d^2$ and $\#\Gamma>12$, then $\Gamma$ supports 
a divisor of type either $\IK_3$ or $\IK_4$.
\end{lemm}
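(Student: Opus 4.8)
The plan is to leverage Lemma~\ref{lem:red1}, which already guarantees that a hyperbolic $\Gamma$ supports a divisor $D$ of Kodaira type $\IK_n$ $(n\geq 3)$ that is a half-pencil of a genus one fibration $|2D|$, chosen so that every multisection in $\Gamma$ is a bisection. By Corollary~\ref{cor:para}, the hypothesis $\#\Gamma>12$ forces $\Gamma$ to be hyperbolic, so such a $D$ exists. If $n=3$ or $n=4$ we are already done, so the content of the lemma is to rule out the larger cases $5\leq n\leq 9$ (the range is bounded above because $\IK_n$ must embed into the fiber lattice of a rational elliptic surface via the Jacobian of $|2D|$). The strategy is therefore to start from such a large half-pencil and show that, under the standing assumptions, $\Gamma$ must in fact also support a divisor of type $\IK_3$ or $\IK_4$.

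First I would count: with $\#\Gamma>12$ and the $n+1\geq 6$ components of the $\IK_n$ half-pencil accounted for, there are still many curves in $\Gamma$ to place. The key dichotomy, following the logic of Section~\ref{ss:3}, is on the number of bisections of $|2D|$ contained in $\Gamma$. If there are at most three bisections, then Lemma~\ref{lem:red1-3sect} applies and pins down the configuration~\ref{ss:VII} uniquely; but that configuration contains an $\IK_3$ half-pencil by construction, so the conclusion holds immediately. Hence I would assume there are at least four bisections in $\Gamma$.

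With four or more bisections available, the plan is to produce a short cycle. Each bisection $B$, being nodal with $B.C\leq 1$ for all $C\in\Gamma$ by \eqref{eq:C}, meets the fiber $\IK_n$ in two distinct components (it cannot meet a single component twice, nor the double components, by the splitting argument on the K3 cover used repeatedly in Lemma~\ref{lem:red1}). Two distinct bisections together with arcs of the $\IK_n$ cycle then bound a closed loop of $(-2)$-curves; by routing through the shorter of the two arcs of the $n$-cycle and choosing the bisections' attachment points optimally, one obtains a cycle of type $\IK_m$ supported on $\Gamma$ with $m$ strictly smaller than $n$. Iterating this shortening procedure — exactly as in the final paragraph of the proof of Lemma~\ref{lem:red1}, where a non-half-pencil $\IK_n$ connected to a multisection yields a cycle of length at most $\lfloor n/2\rfloor+2$ — drives the length down until we reach a cycle of type $\IK_3$ or $\IK_4$.

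The main obstacle I anticipate is controlling the bookkeeping of \emph{where} the bisections attach to the $\IK_n$ cycle, so that the resulting shorter cycles genuinely have all components in $\Gamma$ and genuinely have length $3$ or $4$ rather than stalling at some intermediate value; one must ensure the two attachment points are separated along the cycle by the right amount. A secondary subtlety is that the shortened cycle might itself be a half-pencil, but this is harmless and in fact the desired outcome once its length is $3$ or $4$. Throughout, the standing inequality $H^2>12d^2$ enters only through Lemma~\ref{lem:types} and \eqref{eq:C}, which guarantee that all curves are nodal with pairwise intersection at most $1$, so no new quantitative estimate is needed — the argument is purely combinatorial once those constraints are in force.
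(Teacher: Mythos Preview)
Your proposal contains a genuine gap in the $\geq 4$ bisections case, stemming from a miscount of the intersection between a bisection and the half-pencil. Since $D$ is a \emph{half-pencil}, a bisection $B$ of $|2D|$ satisfies $B.D=1$, not $B.D=2$; hence $B$ meets exactly \emph{one} component of the $\IK_n$ cycle, not two. (Your parenthetical about ``double components'' is also off: an $\IK_n$ fibre has all components of multiplicity one.) Consequently, a single bisection together with an arc of the cycle does not close up into a shorter $\IK_m$, and your iterative shortening argument---which you modelled on the last paragraph of Lemma~\ref{lem:red1}, where $D$ was \emph{not} a half-pencil and the multisection genuinely hit two components---simply does not get started here.

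The paper's argument fills exactly this gap: with each of the $\geq 4$ bisections attaching at a single vertex of the $\IK_n$ cycle, one analyses \emph{pairs} of bisections according to whether they hit the same component, adjacent components, or components at distance two. If two bisections share a component and are adjacent, one gets $\IK_3$; if they hit adjacent (resp.\ distance-two) components and intersect each other, one gets $\IK_4$ (resp.\ $\IK_5$, which is again a half-pencil and one recurses to $n<8$). In every remaining case the pair, together with nearby fibre components, assembles into an additive Kodaira divisor $\IK_0^*$, $\IK_1^*$ or $\IK_2^*$; since additive fibres cannot be half-pencils, another curve in $\Gamma$ then meets this divisor with multiplicity one, i.e.\ would be a section of a genus one fibration on an Enriques surface---impossible. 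A pigeonhole step (four attachment points on an $n$-cycle with $n\leq 9$) guarantees that one of these configurations always occurs. This additive-divisor contradiction is the key mechanism your proposal is missing. (Two minor slips: an $\IK_n$ fibre has $n$ components, not $n+1$; and your treatment of the $\leq 3$ bisection case via configuration~\ref{ss:VII} containing an $\IK_3$ is fine, indeed slightly more direct than the paper, which instead derives a contradiction from Lemma~\ref{lem:conf76}.)
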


\begin{proof}
By
Lemma \ref{lem:red1}, there is a half-pencil of type $\IK_n$ with all its components in $\Gamma$, such that $\Gamma$ contains only 
fiber components and bisections of the genus one fibration it defines. Moreover,  Lemma~\ref{lem:red1-3sect} combined with Lemma~\ref{lem:conf76} shows that 
 $\Gamma$ supports at least 4 such bisections.

If two of the bisections meet the same fibre component,
then either they are adjacent, so we get a $\IK_3$ divisor,
or they are perpendicular and we get a $\IK_0^*$ divisor
which is met by some fibre component with multiplicity one, contradiction.
Hence all bisection meet different fibre components.
If $n<8$, then there are two bisections meeting adjacent fibre components.
Thus we get a $\IK_4$ divisor (if the bisections intersect)
or a $\IK_1^*$ divisor (if the bisections are disjoint), again intersected 
by some fibre component with multiplicity one.
If $n=8$ or $9$, the same reasoning applies,
but the divisors could also have types $\IK_2^*$ (with the same contradiction)
and $\IK_5$ -- which would be multiple,
so we can revisit the case $n<8$
to conclude the proof.
\end{proof}

Now we are in the position to rule out the existence of a triangle (i.e.\ a $\IK_3$ fiber) in $\Gamma$.
\begin{lemm}
\label{lem:I_3}
If
$H^2 >12d^2$ and  $\#\Gamma>12$, then $\Gamma$ supports 
no divisor of type $\IK_3$.
\end{lemm}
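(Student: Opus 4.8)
The plan is to suppose, for contradiction, that $\Gamma$ supports a divisor $T$ of Kodaira type $\IK_3$ — a triangle of three $(-2)$-curves. By Lemma~\ref{lem:types} this triangle is automatically a half-pencil of a genus one fibration, say $|2T|$, and by Lemma~\ref{lem:red1} we may arrange that every multisection of $|2T|$ lying in $\Gamma$ is a bisection. Moreover, the argument of Lemma~\ref{lem:red2} guarantees at least four bisections of $|2T|$ in $\Gamma$. The strategy is to pass to the K3 cover $X$, where each bisection $B\in\Gamma$ splits into two disjoint sections, and to exploit the Mordell--Weil lattice of the induced elliptic fibration on $X$ together with its height pairing. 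Because $\IK_3$ is a half-pencil of low weight, the Jacobian fibration is a rational elliptic surface whose configuration of singular fibres is severely constrained by the classification in \cite{OS} and by Lemma~\ref{lem:types}.

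First I would fix one bisection $B=O+P$ (with $O$ taken as the zero section on $X$) and compute an upper bound for the height $h(P)$, using that $B$ meets no curve of $\Gamma$ with multiplicity greater than one (Lemma~\ref{lem:types}) and that the triangle contributes a fibre of type $\IK_3$, i.e.\ an $A_2$ root lattice, to the trivial lattice. The height formula $h(P)=4+2(O.P)-\sum_v \mathrm{contr}_v(P)$ then forces the local contributions from the $A_2$ fibres and any further reducible fibres to be small, which pins down a very short list of admissible fibre configurations — essentially mirroring the six cases \ref{conf-2I5}--\ref{ss:VII} already dispatched, but now with the extra rigidity that a \emph{second} $\IK_3$ half-pencil (or the interaction of several bisections with the triangle) is present. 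I would then run the same dichotomy as in Section~\ref{ss:3}: configurations giving $h(P)<0$ are immediately contradictory, those forcing $P$ to be torsion identify $Y$ as one of Kondo's finite-automorphism types with at most 12 rational curves (via \cite{Kondo}, \cite{Martin}), and any surviving configuration must be handled by the optimization technique of Remark~\ref{rem:inverse-gram}.

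For the surviving configurations the plan is to invoke the intrinsic-polarization bound \eqref{eq:intrinsic}: assembling the triangle, the bisections, and the remaining fibre components forced into $\Gamma$ into a hyperbolic sublattice $\Gamma'$, I would write $H_0=G^{-1}\vec d$ and estimate $H_0^2$ against $12d^2$ using \eqref{eq:H_0^2}, improving the naive bound by the decomposition $G^{-1}=G_0+G_+$ whenever negative off-diagonal entries appear. Since all the relevant Gram matrices have bounded rank (at most $10$) and the degrees range over $(0,d]$, this reduces to finitely many explicit quadratic optimizations, exactly as in Lemma~\ref{lem:conf76}. The contradiction $H^2\leq H_0^2<12d^2$ against \eqref{eq:H^2} then rules out the triangle.

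The main obstacle I anticipate is the configuration analogous to \ref{ss:VII}, where the height computation does \emph{not} immediately yield $h(P)<0$ or torsion, so the purely lattice-theoretic Mordell--Weil argument is inconclusive. There the burden falls entirely on the optimization step, and the difficulty is twofold: first, enumerating \emph{which} fibre components and bisections are genuinely forced into $\Gamma$ (as opposed to excluded by Lemma~\ref{lem:types} because some curve meets a bisection with multiplicity two), and second, showing that the resulting quadratic form evaluates below $12$ uniformly over the boundary of $(0,1]^{\,k}$. As in Lemma~\ref{lem:conf76} I expect the Gram matrix to decouple into smaller blocks so that the optimization can be broken into independent low-dimensional subproblems and the partial bounds interlaced; verifying that this decoupling persists in the presence of a triangle, and that no boundary stratum violates the bound, is the delicate computational heart of the argument.
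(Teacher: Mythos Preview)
Your outline is workable but takes a substantial detour compared with the paper. The paper's argument never passes to the K3 cover, never invokes Mordell--Weil heights, and never classifies the remaining singular fibres of $|2T|$: once Lemmas~\ref{lem:red1-3sect} and~\ref{lem:conf76} (not Lemma~\ref{lem:red2}) supply at least four bisections of $|2T|$ in $\Gamma$, the intrinsic polarization on the lattice spanned by the triangle together with just two to four of these bisections already finishes the job. If two bisections are adjacent, the rank-five lattice they generate with $T$ gives $H_L^2\le 10\tfrac38\,d^2$ (or $9\tfrac16\,d^2$ when both meet the same component of $T$); if all four are pairwise disjoint, a short case split on how they distribute among the three components of $T$ yields $H_L^2\le 11\tfrac56\,d^2$. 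Either way \eqref{eq:intrinsic} contradicts \eqref{eq:H^2}. The point is that an $\IK_3$ half-pencil has such low weight that the triangle plus four bisections already overconstrain $H$, so nothing about the rest of the fibration is needed. Your plan would instead rerun the Section~\ref{ss:3} height analysis under an extra $\IK_3$ constraint and then, as you yourself anticipate, fall back on exactly this optimization in the surviving cases; since the optimization alone (on a lattice of rank at most seven) already suffices, the entire Mordell--Weil layer is redundant here. As a minor technical slip: on the K3 cover the half-pencil $\IK_3$ pulls back to an $\IK_6$ fibre, so its contribution to the trivial lattice is $A_5$, not $A_2$ --- compare the correction term $\tfrac32$ used in \ref{conf-4I3s}.
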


\begin{proof}
Assume to the contrary that $\Gamma$ supports a triangle. 
Since $\# \Gamma > 12$, the graph $\Gamma$ is hyperbolic, so we can apply Lemma~\ref{lem:types} 
to show that 
the $\IK_3$-configuration is a half-pencil of a genus one fibration
such that $\Gamma$ contains only its fiber components and bisections.  
Moreover,  by Lemmas~\ref{lem:red1-3sect}, \ref{lem:conf76} $\Gamma$ must contain at least four
bisections. Thus we can assume that $\Gamma$ supports a divisor of type $\IK_3$
with 4 bisections. We claim that 
\begin{equation} \label{eq-ineqI3}
H^2\leq 11\frac 56\,d^2 .
\end{equation}
Indeed, the proof of \eqref{eq-ineqI3} amounts to  bounding the degree of the intrinsic polarization for some test (sub)configurations.
The bound is obtained from the entries of the inverse of the Gram matrix of the (sub)configuration in question (see Remark~\ref{rem:inverse-gram}).
If there are two adjacent bisections,
then the rank five lattice $L$ generated by the $\IK_3$ fibre and these bisections gives
$$
H_L^2\leq 
\begin{cases} 9\frac 16 & \text{if the bisections meet the same fibre component,}\\
10\frac 38 & \text{if the bisections meet  different fibre components.}
\end{cases}
$$
Otherwise all bisections are perpendicular, and a quick analysis 
gives the inequality \eqref{eq-ineqI3} for the intrinsic polarization in each possible case.
(In the case when each fiber component is met by at least one  bisection,  
one can split off a negative semi-definite matrix as in Remark \ref{rem:inverse-gram}
to compensate for negative entries in $G^{-1}$.) 

Obviously \eqref{eq-ineqI3} yields the desired contradiction and completes the proof.
\end{proof}

In particular, we have shown that $\Gamma$ contains a quadrangle (i.e.\ an $\IK_4$ fiber)
 as soon as  $\#\Gamma>12$ and the degree of the polarization is high enough (i.e.\ \eqref{eq:H^2} holds).  
Recall that, by Lemma~\ref{lem:types}, the system $|2\IK_4|$ endows the Enriques surface $Y$ with a genus one fibration. 

\begin{lemm}
\label{lem:I_4-5bisections}
If $H^2 >12d^2$ and  $\#\Gamma>12$, then $\Gamma$ supports at most $4$ bisections of the fibration $|2\IK_4|$.
\end{lemm}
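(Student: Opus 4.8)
The plan is to assume that $\Gamma$ contains five bisections $B_1,\dots,B_5$ of the fibration $|2D|$, where $D$ is the $\IK_4$ half-pencil furnished by Lemmas~\ref{lem:red2} and \ref{lem:I_3}, and to derive the contradiction $H^2<12d^2$. Writing $D=\Theta_0+\Theta_1+\Theta_2+\Theta_3$ for the four-cycle of $(-2)$-curves, I first record the combinatorial constraints coming from Lemma~\ref{lem:types} and \eqref{eq:C}: each $B_k$ satisfies $B_k.D=1$ together with $B_k.\Theta_i\le 1$, so $B_k$ meets exactly one component $\Theta_{i(k)}$ of the $\IK_4$; moreover $B_k^2=-2$ and $B_k.B_l\le 1$ for all $k,l$.

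First I would prune the possible configurations. Since there are five bisections and only four components, by the pigeonhole principle two of them, say $B_1,B_2$, meet a common component $\Theta_i$. If in addition $B_1.B_2=1$, then $\Theta_i,B_1,B_2$ pairwise meet in one point and so form a triangle, i.e.\ a divisor of type $\IK_3$ supported on $\Gamma$, which is excluded by Lemma~\ref{lem:I_3}; hence bisections attached to a common component are mutually disjoint. To control the remaining freedom I would pass to the K3 cover $X$: the half-pencil of type $\IK_4$ pulls back to a connected fibre of type $\IK_8$, whose two components over each $\Theta_i$ are interchanged by the (fixed-point-free) covering involution acting as the order-two rotation of the eight-cycle. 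Each bisection thus splits into two disjoint sections meeting a pair of such interchanged components. Taking $B_1$ to supply the zero section, the Mordell--Weil height inequality $h(P)\ge 0$, the local $\IK_8$-contributions, and the pairwise bound $B_k.B_l\le 1$ leave only finitely many combinatorial types for the sublattice $L=\langle\Theta_0,\dots,\Theta_3,B_1,\dots,B_5\rangle$.

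For each surviving type I would bound $H^2$ through the intrinsic polarization, exactly as in Remark~\ref{rem:inverse-gram} and in the proof of Lemma~\ref{lem:I_3}. Concretely, I would choose a basis of $L/\ker$ (enlarging $L$ by one further curve of $\Gamma$ to a nondegenerate hyperbolic lattice whenever $L$ itself is only parabolic, which is permissible because $\Gamma$ is hyperbolic, so that $D$ is never in the kernel), compute the inverse Gram matrix $G^{-1}$ with entries $g_{ij}$, and estimate
\[
H^2\le H_L^2=\vec d^{\,T}G^{-1}\vec d\le\Big(\sum_{i,j}\max(0,g_{ij})\Big)d^2 ,
\]
sharpening the estimate by splitting off a negative semi-definite matrix whose kernel contains the all-ones vector whenever $G^{-1}$ has negative entries. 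The aim is to verify that every configuration yields $H_L^2<12\,d^2$, which contradicts $H^2>12d^2$ together with $H^2\le H_L^2$.

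The main obstacle I expect lies in this last step. The five bisections can attach to the four cycle-components in several distributions and with various admissible mutual intersections, so there are many Gram matrices to treat, and the off-diagonal entries of $G^{-1}$ are genuinely mixed in sign; hence the crude bound $\sum_{i,j}\max(0,g_{ij})$ will not suffice, and the negative semi-definite splitting of Remark~\ref{rem:inverse-gram} must be carried out with care in each case to push the optimum safely below $12$. The Mordell--Weil pruning of the second step is precisely what keeps this analysis finite and, I expect, short enough that only a handful of genuinely distinct configurations require the refined estimate.
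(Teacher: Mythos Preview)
Your approach differs from the paper's in the key endgame. You aim to bound $H^2$ via the intrinsic polarization and derive $H_L^2<12d^2$, whereas the paper never touches $H^2$ at this step: it instead derives a purely lattice-theoretic contradiction with $\rho(Y)=10$. Concretely, after your first observation (bisections attached to a common component are disjoint), the paper adds one more combinatorial constraint you do not mention: if \emph{three} bisections met a single component $\Theta_i$, then $\Theta_i$ together with these three (mutually disjoint) bisections and an adjacent cycle-component would support an $\IK_0^*$ divisor, and the remaining adjacent component of the $\IK_4$ would meet it with multiplicity one, a contradiction. Hence each $\Theta_i$ carries at most two bisections. With this in hand, the paper runs through the finitely many admissible intersection patterns of the $\IK_4$ plus five bisections and checks, in each case, that one can extract a subconfiguration whose Gram matrix has rank exceeding $10$; since all these curves live in $\Num(Y)$ and $\rho(Y)=10$, this is impossible.

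What each approach buys: the paper's route is cleaner and, notably, does not use the hypothesis $H^2>12d^2$ at all at this stage---the obstruction is intrinsic to the Enriques lattice. Your route is in the spirit of the surrounding lemmas (Lemmas~\ref{lem:types}, \ref{lem:conf76}, \ref{lem:I_3}), but, as you yourself anticipate, the optimization over all admissible Gram matrices with mixed-sign inverse is genuinely delicate, and you have not yet verified that every case lands below $12d^2$. The Mordell--Weil pruning on the K3 cover is a reasonable substitute for the $\IK_0^*$ argument, but it is heavier machinery for the same combinatorial conclusion. If you want to complete your plan, I would first insert the $\IK_0^*$ step to cut the case list down to the two distributions $(2,2,1,0)$ and $(2,1,1,1)$ of bisections over components; after that, either method should terminate quickly.
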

\begin{proof} 
%
Observe that any two bisections meeting the same fibre component
are disjoint, for otherwise they would form a $\IK_3$ divisor, which we have covered already in Lemma \ref{lem:I_3}.
Moreover, if there were more than two bisections meeting the same fibre component of the $\IK_4$,
then we could build a $\IK_0^*$ divisor intersected by some  fibre component
with multiplicity one, contradiction.
Hence each fibre component is met by at most two bisections.

If the fibration given by $|2\IK_4|$ has 5 bisections, 
then one can systematically go through all possible configurations 
to confirm the claim.
In fact, for each configuration, one easily finds a suitable subconfiguration of rank exceeding 10.
But this is the Picard number of any Enriques surface, regardless of the characteristic (see \cite{BM76}),
contradiction.
%
%
%
%
\end{proof}

After these preparations we can finally complete the proof of the bound.

\subsection*{Proof of Theorem \ref{thm}}
In order to derive a contradiction, 
we assume that $H^2 >12d^2$ and  $\#\Gamma>12$. 
Recall that $\Gamma$ is hyperbolic by Cor.~\ref{cor:para}.

Lemmas~\ref{lem:red2},~\ref{lem:I_3} yield that $\Gamma$ contains an $\IK_4$ configuration. 
From Lemma~\ref{lem:types} we infer that $\Gamma$ consists of fiber components and bisections of $|2\IK_4|$.
Lemma~\ref{lem:I_4-5bisections} implies that  $\Gamma$ contains at most four bisections.

If  $\Gamma$ contains  at least 10 fiber components, then
Lemma~\ref{lem:red1-3sect} combined with Lemma~\ref{lem:conf76} leads to a contradiction.
Thus the assumption $\#\Gamma>12$ implies that $\Gamma$ contains exactly nine fiber components 
and exactly four bisections. 
The classification in \cite{OS} gives the following  configurations:

\subsection{
$\IK_4+\IK_5$} 

Together with one bisection these curves support a divisor of type $\IK_3$, leading 
back to Lemma \ref{lem:I_3},
 or of type $\IK_2^*$
met by some fibre component with multiplicity one, contradiction.

\subsection{
$\IK_4+\IK_0^*$}

Here each bisection connects with $\IK_0^*$ to another $\IK_4$,
so one of these has at least 5 bisections, which is impossible by Lemma~\ref{lem:I_4-5bisections}.

\subsection{
$\IK_4+A_3+A_1+A_1$}

The root lattices could be realized inside a single $\IK_1^*$
or inside $\IK_4+\IK_2+\IK_2$.
In the former case, any bisection splits into section and two-torsion section on the K3 cover,
so we derive Kondo's type II surfaces -- with exactly 12 smooth rational curves by \cite{Kondo}, \cite{Martin}.
In the latter case, the given curves together with any bisection form a $\QQ$-basis of $\Num(Y)$,
and one can easily check that any  configuration  either leads back to a previous case
or leaves no room for further smooth rational bisections fitting our scheme imposed by \eqref{eq:C}.

\subsection{
$\IK_4+D_5$}

This case either gives Kondo's family of type II again
or leaves no room for more than one smooth rational bisection. 

\subsection{
$\IK_4+\IK_4+A_1$}

Here any bisection splits into section and two-torsion section on the K3 cover,
so the bisection does not meet the $A_1$ summand (which embeds into an $\IK_2$ fibre).
Hence all computations can be carried out in a lattice of rank at most $9$.
For each possible configuration, this quickly leads to a contradiction. 
\qed

\section{Enriques surfaces with 12 rational curves of even degree} 
\label{ss:even_d}

Proving Proposition \ref{prop} (i) amounts to the following:

\begin{prop}
\label{prop:even_d}
Let $d>2$ and $h\geq 3d$.
Then there is an Enriques surface of degree $2h$
containing 12 rational curves of degree $2d$.
\end{prop}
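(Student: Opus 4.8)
The plan is to realize the desired Enriques surface as a quotient of an elliptic K3 surface, exploiting Corollary~\ref{cor:para}~(ii) and the remark following it, which guarantees that any configuration of 12 rational curves arising as fiber components of a genus one fibration is attainable for arbitrary polarization degree. Concretely, I would engineer a genus one fibration $Y\to\PP^1$ whose singular fibers, together with a multisection, contribute exactly 12 curves of degree $2d$ relative to a suitable polarization. Since the target is \emph{even} degree $2d$ and we want $r_{2d}=12$, the natural source is Kondo's construction (invoked already in the corollary after Lemma~\ref{lem:types}): start from an elliptic K3 surface $X$ carrying a fixed-point-free involution compatible with the fibration, so that $X/\iota=Y$ inherits a genus one fibration whose fibers descend from those of $X$.

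The key steps, in order, would be as follows. First I would fix a genus one fibration on $Y$ with a half-pencil of type $\IK_n$ (say $\IK_4$, matching the extremal configuration that governs the proof of Theorem~\ref{thm}) so that the reciprocal structure of the previous sections is mirrored on the construction side. Second, I would arrange the fiber components and a collection of bisections to total exactly 12 curves, using the classification of rational elliptic surfaces in \cite{OS} to select a fiber configuration that supports this count; the Jacobian being a rational elliptic surface keeps the bookkeeping of $e(Y)=12$ and the signature $(1,9)$ of $\Num(Y)$ under control. Third, I would choose the polarization $H$ of degree $2h$ by prescribing $C.H$ on the 12 curves: setting each $C.H=2d$ realizes them as degree-$2d$ curves, and the constraints $h\geq 3d$ and the very-ampleness Criterion~\ref{crit} (requiring $H.D\geq 3$ on every half-pencil and positivity on $(-2)$-curves) must be checked to be simultaneously satisfiable. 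The hypothesis $h\geq 3d$ is precisely what should give room to satisfy $H.D\geq 3$ while keeping $H$ in the ample cone.

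The main obstacle, I expect, is existence of the surface in the required generality: one must produce an actual Enriques surface (equivalently, a K3 cover with a free involution) carrying the prescribed fibration \emph{and} admitting a polarization of \emph{every} degree $2h$ with $h\geq 3d$, not merely one special degree. The parameter $h$ ranging freely means the polarization cannot be rigidly tied to the fibration; instead I would add the fiber class $F$ (or the half-pencil $D$) with a free multiplier to the 12-curve data, so that $H$ varies in a one-parameter family of ample classes indexed by $h$ while the 12 curves retain degree $2d$. Verifying that this family stays inside the ample cone and that the 12 curves remain the \emph{only} curves of their degree (so that $r_{2d}=12$ exactly, not merely $\geq 12$) is the delicate point; here the bound from Theorem~\ref{thm}, namely $S_{2d}\leq 12$ once $H^2>12(2d)^2$, does the heavy lifting, provided $h$ is large enough that $H^2=4h^2\cdot(\text{something})>48d^2$, which $h\geq 3d$ comfortably ensures. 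The remaining work is the explicit lattice-theoretic check that the chosen $\Num(Y)$ with the designated curve classes admits the Enriques involution and the ample polarization, which I would carry out by exhibiting the Gram matrix and confirming its signature and the embedding into the Enriques lattice.
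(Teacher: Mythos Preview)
Your proposal is a plan rather than a proof, and it diverges sharply from the paper's argument while containing a concrete error.

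The paper's construction is far simpler than what you sketch. It takes a \emph{general unnodal} Enriques surface $Y$---one with no smooth rational curves whatsoever---so that a chosen genus one fibration $|2E|$ has exactly 12 irreducible nodal cubics (type $\IK_1$) as singular fibres. These are the 12 (singular) rational curves. Picking half-pencils $E,E'$ with $E.E'=1$ and $D'\in U^\perp\subset\Num(Y)$ of suitable square, one sets $D=E+dE'+D'$ and $H=NE+D$; then each $\IK_1$ fibre $F\sim 2E$ automatically has degree $H.F=2D.E=2d$, and Criterion~\ref{crit} is checked directly for $N\geq 3$, giving $H^2=2Nd+c$ hitting every even value $\geq 6d$. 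There is no Kondo quotient, no reducible fibre, no bisection, and no lattice-embedding problem.

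Your route via an $\IK_4$ half-pencil together with bisections might in principle be workable, but as written there are genuine gaps. You never specify the fibre configuration or exhibit the polarization, and your attempt to estimate $H^2$ is wrong: by definition $H^2=2h$, so $h\geq 3d$ gives only $H^2\geq 6d$, nowhere near the $12(2d)^2=48d^2$ required to invoke Theorem~\ref{thm}. That invocation is in any case unnecessary: the proposition asks only that $Y$ \emph{contain} 12 rational curves of degree $2d$, and the paper's proof establishes just this lower bound. Finally, your scheme forces many independent degree constraints (each fibre component and each bisection must individually have degree $2d$), whereas using irreducible $\IK_1$ fibres makes all 12 degrees equal to the fibre degree for free---this is precisely why the even-degree case is easier and treated first.
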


\begin{proof}
Let $Y$ be a general Enriques surface
such that the following hold:
\begin{itemize}
\item
it contains no smooth rational curve;
\item
each genus one fibration on $Y$  has exactly 12 reduced singular fibers, all of which are nodal cubics;
\item
for each genus one fibration on $Y$ both half-fibers  are irreducible, smooth elliptic curves.  
\end{itemize}
The existence of such a surface can be shown with help of 
the relation between deformations of unnodal Enriques surfaces and deformations of
their Jacobians (see \cite[Remark~5.6]{Martin2}).

Recall that there exist   half-pencils $E, E'$ of genus one fibrations on $Y$ such that $E.E'=1$,
 (this follows from   \cite[Thm~6.1.10]{Dolgachev-Kondo};  
 here the assumption that $Y$ is unnodal 
 is of importance only when $\mbox{char}(k)=2$).

As in the K3 case, we want to work with a sublattice
\[
L = \begin{pmatrix}0 & d\\ d& c\end{pmatrix}
\hookrightarrow\Num(Y) 
\cong U + E_8
\]
for some $c\in\{0,\hdots,2d-2\}$.

Here we set up $L$ as follows:
We consider $U\subset\Num(Y)$, that is generated by the half-pencils $E, E'$  
and pick $D'\in U^\perp$ with $D'^2 = c-2d$.
Consider the divisor
\[
D = E + dE' + D' \;\;\; \text{ with } \;\;\; D^2 = c.
\]
By Riemann--Roch, $D$ is effective (since $D.E'=1$ and $D^2 \geq 0$).
Moreover, by assumption,
all singular fibres of $|2E|$ have type $\IK_1$,
so there are 12 in number,
none of which is multiple.	
Consider the divisor 
\[
H = NE + D \;\;\; (N\in\NN).
\]
We claim that $H$ is very ample if $N\geq 3$.
Indeed, we have
\begin{itemize}
\item
$H^2 = 2Nd+c\geq 6N$;
\item
$H.C>0$ for any curve $C\subset Y$,
since $C$ is multisection for $|2E|$ or $|2E'|$, so
$H.C\geq (E+E').C>0$;
%
\item
$H.E''>2$ for any half-pencil $E''$ on $Y$, since either $2E''\sim 2E$ and $H.E''=H.E=d>2$
or $E''$ is a multisection of $|2E|$, so $H.E''\geq N$.
\end{itemize}
It follows from 
Criterion \ref{crit}
that for any $h\geq 9$, we find $N$ and $c$ as above such that $H$ is very ample
with $H^2=2h$. By construction, $(Y, H)$ contains  rational 12 curves of degree $2d$
(the singular fibres of $|2E|$).
\end{proof}

\section{Enriques surfaces with 12 smooth rational curves of fixed degree}
\label{ss:all_d}

In this section we prove Proposition \ref{prop} (ii).
This is equivalent to the following:

\begin{prop}
\label{prop:all_d}
Assume that the characteristic of $k$ differs from $2$.
Let $d>2$ and $h\geq 9$ such that $d\mid h$.
Then there is an Enriques surface of degree $2h$
containing 12 smooth rational curves of degree $d$.
\end{prop}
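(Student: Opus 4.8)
The plan is to mimic the construction from Proposition~\ref{prop:even_d}, but to replace the genus one fibration whose 12 singular fibres are nodal cubics by a fibration whose 12 rational fibre components are \emph{smooth} rational $(-2)$-curves of the prescribed degree $d$. Concretely, I would engineer an Enriques surface $Y$ carrying a genus one fibration $|2\IK_4|$ (equivalently $|2D|$ for a half-pencil $D$ of Kodaira type $\IK_4$), so that $\Gamma$ attains the parabolic bound $12$ of Corollary~\ref{cor:para}~(ii) with all twelve curves being smooth rational fibre components. The natural source for such a surface is Kondo's construction (cf.\ the corollary following Lemma~\ref{lem:types} and its generalization in \cite{HS}): starting from an elliptic K3 surface with two disjoint sections $O,P$ and suitable singular fibres, the Enriques involution produces $Y$ together with bisections splitting into $O,P$, and the multiple fibres give the half-pencils we need. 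Since $\operatorname{char}(k)\neq 2$, there is no obstruction from classical or supersingular Enriques surfaces, and the K3 cover is automatically smooth, so all the lattice-theoretic tools remain available.

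First I would fix the fibration with fibre components forming the twelve-curve configuration and identify a half-pencil $E$ of a second genus one fibration meeting the first transversally, exactly as the pair $E,E'$ with $E.E'=1$ in the proof of Proposition~\ref{prop:even_d}; existence of such a hyperbolic plane $U=\langle E,E'\rangle\subset\Num(Y)\cong U+E_8$ follows from \cite[Thm~6.1.10]{Dolgachev-Kondo}. Next I would build the polarization by the same recipe $H=NE+D$ with $D=E+\tfrac{h}{d}E'+D'$ for a correcting class $D'\in U^\perp$ chosen so that $D^2$ lands in the admissible range and $H^2=2h$; the divisibility $d\mid h$ is precisely what lets the twelve target curves all have degree exactly $d=C.H$. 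Verifying very ampleness then reduces to checking Criterion~\ref{crit}: that $H.D''\geq 3$ for every half-pencil $D''$ and $H.E''>0$ for every $(-2)$-curve, which for $N$ large (say $N\geq 3$, $h\geq 9$) follows from the intersection estimates $H.C\geq(E+E').C>0$ together with the multiplicity bounds, just as before.

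The step I expect to be the main obstacle is not the polarization bookkeeping, which is essentially identical to the even-degree case, but rather guaranteeing the \emph{existence} of an Enriques surface whose fibration realizes the desired configuration of twelve \emph{smooth} rational fibre components with the intersection data compatible with $C^2=-2$ and $C.C'\leq 1$ from \eqref{eq:C}. Here one cannot simply invoke a general Enriques surface as in Proposition~\ref{prop:even_d}, since a generic surface is unnodal and contains no $(-2)$-curve at all; instead one must produce a suitably special member of the moduli, and the clean way to do this is to descend from an explicit elliptic K3 cover via Kondo's construction, using the classification of rational elliptic surfaces \cite{OS} and of extremal cases \cite{Lang1}, \cite{Lang2} to arrange the correct fibre types and a pair of disjoint sections $O,P$. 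Controlling the Enriques involution so that the quotient fibration carries exactly the $\IK_4$ half-pencil and the twelve fibre components, and checking in each characteristic $\neq 2$ that the construction does not degenerate, is the delicate part; the remaining verification that the resulting $(Y,H)$ has $r_d=12$ is then immediate from the fibration structure.
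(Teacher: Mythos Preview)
Your general strategy---descend from an explicit elliptic K3 via Kondo's construction and then tune a polarization using Criterion~\ref{crit}---is indeed what the paper does, but the polarization step in your proposal has a genuine gap. The recipe $H=NE+D$ with $D=E+(h/d)E'+D'$ is lifted verbatim from Proposition~\ref{prop:even_d}, where the twelve rational curves were nodal cubics, all numerically equivalent to $2E$ and hence automatically of the same degree $2E.H$. Here the twelve curves are $(-2)$-components of reducible fibres, and these are \emph{not} numerically equivalent to one another: a half-pencil $E'$ with $E.E'=1$ will typically meet the components of a given reducible fibre unequally (only the total $F.E'=2$ is fixed), and the correcting class $D'\in U^\perp$---which in the unnodal setting of Proposition~\ref{prop:even_d} was harmless precisely because there were no $(-2)$-curves---will now generally pair non-trivially with fibre components. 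So nothing in your formula forces all twelve components to have the common degree $d$, and the divisibility $d\mid h$ by itself does not buy this; your sentence ``the divisibility $d\mid h$ is precisely what lets the twelve target curves all have degree exactly $d$'' is where the argument breaks.

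The paper resolves this by choosing the fibre configuration and the second isotropic class much more carefully than a bare appeal to \cite[Thm~6.1.10]{Dolgachev-Kondo}. It builds, via explicit Weierstrass equations and a quadratic base change, an Enriques surface whose genus one fibration has six fibres of type $\IK_2$ together with a nodal rational bisection $B$ with $B^2=2$ that meets \emph{each} of the twelve components transversally in a single point. Setting $E'=B-E$, one then has $\Theta.E'=1$ for every component $\Theta$, so the polarization $H=NE+dE'$ gives every $\Theta$ degree exactly $d$, with $H^2=2Nd$; the condition $d\mid h$ enters only here, as $N=h/d$. No $D'$ correction is needed. The delicate point you correctly anticipated---ruling out stray $(-2)$-curves with $H.C\leq 0$---is handled by proving that $|2E'|$ is base-point free, which is reduced to a height computation on the K3 cover. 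Your $\IK_4$ suggestion could in principle be made to work, but it would require producing an analogous bisection (or isotropic class) hitting all twelve components exactly once, and you have not supplied one.
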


To prove the proposition, 
we aim at constructing a 3-dimensional family of Enriques surfaces $Y$
with a genus one fibration with 6 fibres of type $\IK_2$
and a rational bisection $B$ with $B^2=2$
which meets each reducible fibre in both components.

\subsection{Base change construction}
Start with the rational elliptic surface $S$ which will feature as Jacobian of $Y$.
This takes the shape
\[
S: \;\;\; y^2 = x(x-f)(x-g), \;\;\; f,g\in k[t], \; \deg(f)=\deg(g)=2.
\]
We want to apply the base change construction from \cite{HS}, 
i.e.\ endow some quadratic base change of $S$ (an elliptic K3 surface $X$)
with a section $P$ which is anti-invariant in $\MW(X)$ for the deck transformation $\imath$
of the double cover $X\dasharrow S$.
Thus  we are led to work on the quadratic twist
\begin{eqnarray}
\label{eq:S'}
S':\;\;\; qy^2 = x(x-f)(x-g)
\end{eqnarray}
at some quadratic non-square polynomial $q\in k[t]$.
We need to endow $S'$ with a section $P'$ of height $3$
which meets all reducible fibres (6 $\IK_2$ and 2 $\IK_0^*$) non-trivially.
That is, $P'.O'=2$ with intersection points at another quadratic polynomial $w\in k[t]$.
In the above affine model \eqref{eq:S'}, the section $P'$ takes the shape
\begin{eqnarray}
\label{eq:P'}
\;\;\;\;\;
P'=\left(\dfrac{fgr}{w^2}, \dfrac{fg(f-g)r'}{w^3}\right),\; \;\;\; r,r'\in k[t], \; \deg(r)=\deg(r')=2.
\end{eqnarray}
where the $\IK_2$ fibres are located at the zeroes of $fg(f-g)$
(so the $y$-coordinate of $P'$ vanishes at all of them as should be).
We now make the following convenient normalizations and choices:
\begin{itemize}
\item
$w$ is monic (by absorbing the top coefficient into the nominator);
\item
$f$ is monic (by rescaling $x,y$)
\item
the zeroes of $f-g$ are $t=0, \infty$ by M\"obius transformation,
i.e.\ the top and bottom coefficients agree:
\[
f = t^2 + bt + c^2; \;\;\; g = t^2 + b't + c^2.
\]
\item
$r$ is a square with zero at $t=-1$;
the second choice amounts to a M\"obius transformation again
while the first is one of a few natural choices on elliptic surfaces with  two-torsion in $\MW$, cf. \cite[\S 8.1.4]{S-Q-hom}.
\end{itemize}
For $P'$ to meet the node of $\IK_2$ fibres at $t=0, \infty$ of the Weierstrass model \eqref{eq:S'}
(so it meets the non-identity component of the Kodaira--N\'eron model),
we read off the top coefficient of $r$ and subsequently the bottom coefficient of $w$:
\[
r = (t+1)^2, \;\;\; w = t^2 + d't + c.
\]
This yields $r'=(t+1)r''$,
and it remains to choose the coefficients $b, b', c, d'$ in such a way that 
substituting the $x$-coordinate of $P'$ into the RHS of \eqref{eq:S'}
gives a square $r''^2$ next to the other obvious squares.
This amounts to computing the discriminant of a degree 4 polynomial
and has, up to symmetry in $b$ and $b'$, the unique solution
\[
d' = 2d-1-c, \;\;\; b' = d^2-2c.
\]
One then verifies that generically this exactly gives the configuration of singular fibres and section
which we are aiming for.

\subsection{Enriques quotient}

The quadratic base change of $S$ (and $S'$) ramified at the zeroes of $q$ generically
gives the announced elliptic K3 surface $X$ with 12 $\IK_2$ fibres
and section $P$ of height $6$ pulling back from $P'$.
By construction, $P'$ is anti-invariant in $\MW(X)$ for the deck transformation $\imath$,
so composing $\imath$ with translation by $P$ gives a fixed point free involution $\jmath$
(since $P$ meets non-trivial two-torsion points at the fixed fibres,
corresponding to the non-identity components of the $\IK_0^*$ fibres on $S'$).
Hence $Y = X/\langle\jmath\rangle$ is an Enriques surface,
and $O, P$ map down to a rational bisection $B$ which meets each $\IK_2$ fibre
in both components and which has two nodes at the zeroes of $w$, i.e.\ $B^2=2$. 
(Note that $P.O=4$ and $(P+O)^2 = 4$.)

\subsection{Degree $d$ smooth rational curves on $Y$}

Consider the induced half-pencil $E$  on $Y$.
Instead of working with the bisection $B$, we note that the isotropic divisor
\[
E' = B-E
\]
is effective by Riemann--Roch (since $E.E'=1$).
We claim that $|2E'|$ has no base locus,
for otherwise there would be a $(-2)$-curve $C$ such that 
\[
E'.C<0 \;\;\; \text{ and } \;\;\; E''=E'+(E'.C)C>0.
\]
But then $E.E'=1$ implies, since $E$ is nef, that $E.C=0$ or $1$
and same with interchanged values for $E.E''$.
In the former case, we derive the contradiction
\[
0<B.C = (E+E').C < 0.
\]
In the latter case, we have $E.E''=0$, so $E\equiv E''$ and $C$ is a bisection of $|2E|$.
But then it splits on the K3 cover $X$ into two disjoint 
sections always meeting opposite components of the $\IK_2$ fibres,
 and as in Section \ref{ss:3},
the height of one section relative to the other returns $h=4-12\cdot\frac 12=-2$ which is impossible.

To conclude the proof of Proposition \ref{prop:all_d}, consider the divisor
\[
NE+dE'
\]
which is easily verified, using Criterion \ref{crit}, to be very ample for $N,d\geq 3$.
This leads to the claimed range of polarizations, and to 12 smooth rational curves of degree $d$ 
-- the components of the six $\IK_2$ fibres.
\qed

 \subsection{Odd degree curves in characteristic 2}
 We conclude this section with an indication
 why characteristic $2$ can be rather special:
 
 \begin{prop}
 \label{prop:p=2}
 Let $d$ be odd. Let $Y$ be an Enriques surface with smooth K3 cover
 over a field of characteristic $2$, endowed with a polarization of degree $H^2\gg 0$.
 Then 
 \[
 r_d<11.
 \]
 \end{prop}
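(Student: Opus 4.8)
The plan is to show that in characteristic $2$, an Enriques surface with smooth K3 cover (a singular Enriques surface) cannot attain the sharp bound of $12$ rational curves of odd degree $d$ when $H^2\gg 0$, so that $r_d\leq 10$, hence $r_d<11$. The starting point is the ineffective argument indicated after Lemma~\ref{lem:Kodaira_type}: once $H^2$ is large enough relative to $d$, the inequality \eqref{eq:intrinsic} combined with the finitely many possible configurations forces $\Gamma$ to be parabolic rather than hyperbolic (this is exactly Lemma~\ref{lem:para}). So first I would invoke Lemma~\ref{lem:para} to reduce to the case where $M=\ZZ\Gamma$ is parabolic, which by Lemma~\ref{lem:ortho}~(ii) yields a divisor $D$ of Kodaira type supported on $\Gamma$ inducing a genus one fibration \eqref{eq:fibr}, with $\Gamma$ the set of its rational fibre components of degree $\leq d$.

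Next I would exploit the special geometry of genus one fibrations in characteristic $2$. The key structural input is that on a singular Enriques surface the multiple fibres of any genus one fibration behave differently than in odd characteristic: in characteristic $2$ there is essentially one multiple (half-) fibre rather than two, a phenomenon already flagged in the parenthetical remarks of configurations~\ref{conf-4I3s}, \ref{conf-2I42I2}, \ref{ss:VII}. I would parametrize the parabolic configurations attaining $12$ components using Corollary~\ref{cor:para}~(ii) together with $e(Y)=12$, so that the full singular fibre configuration of the Jacobian (a rational elliptic surface by the discussion in Section~\ref{ss:3}) is accounted for by the twelve curves. The crucial point is then a \emph{parity} obstruction: since $d$ is odd and each rational curve $C\in\Gamma$ has $C.H=d$, the polarization $H$ restricted to the fibre-component sublattice must realize odd degrees on every component. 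I would analyze which Kodaira fibre types allow all reduced components to simultaneously carry odd degree under a single polarization, and show that the multiple-fibre structure in characteristic $2$ obstructs reaching twelve such components.

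The main obstacle, and the heart of the argument, is controlling the interaction between the odd-degree hypothesis and the characteristic-$2$ constraint on multiple fibres. Concretely, to get $12$ curves of degree $d$ as fibre components one wants a configuration like the six $\IK_2$ fibres engineered in Proposition~\ref{prop:all_d}; but that construction uses two $\IK_0^*$ fibres to build the fixed-point-free involution $\jmath$ via a two-torsion section meeting opposite $\IK_0^*$ components, and this mechanism is exactly what degenerates in characteristic $2$. I would therefore argue that any parabolic configuration with twelve odd-degree components would require a half-pencil or fibre structure incompatible with the single-multiple-fibre constraint, ruling out the value $12$ and also, by a finer count, the value $11$. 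The delicate step is verifying that no alternative Kodaira configuration (e.g. built from $\IK_n$ cycles or from $\tilde D$, $\tilde E$ types) slips through; here I would run through the Oguiso--Shioda classification \cite{OS} of the admissible Jacobian fibre types and eliminate each using the odd-degree parity together with Lemma~\ref{lem:types} and the characteristic-$2$ multiplicity restriction, concluding $r_d\leq 10<11$.
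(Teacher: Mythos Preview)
Your overall strategy matches the paper's: invoke Lemma~\ref{lem:para} to force $\Gamma$ parabolic once $H^2\gg 0$, then realize all curves of degree $d$ as fibre components of a genus one fibration, and exploit the parity constraint coming from $d$ odd together with the fact that a singular Enriques surface in characteristic~$2$ has only one multiple fibre. The paper's parity step is exactly what you are gesturing at: since $F\equiv 2E$ for a half-pencil $E$, one has $H.F$ even, so every non-multiple fibre entirely supported on $\Gamma$ must have an even number of components, forcing types $\IK_{2n}$; the multiple fibre's support may then only be smooth or $\IK_n$. This cuts the list of candidate configurations with $12$ components down to $6\times\IK_2$, $5\times\IK_2+2\times\IK_1$, and $2\times\IK_4+2\times\IK_2$.

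However, there is a genuine gap. Your proposed toolkit---parity, the single-multiple-fibre constraint, and Lemma~\ref{lem:types}---does \emph{not} eliminate the configuration $6\times\IK_2$. In that case the unique multiple fibre can simply be smooth, so the multiplicity restriction says nothing; parity is satisfied; and Lemma~\ref{lem:types} concerns the hyperbolic case, not the parabolic one. The paper closes this case (and incidentally $2\times\IK_4+2\times\IK_2$) by a different, characteristic-$2$-specific input: the Jacobian, a rational elliptic surface, cannot carry the configuration $6\times\IK_2$ in characteristic~$2$ (this is \cite[Thm.~8.9]{SS-MWL}, not the characteristic-zero table \cite{OS} you plan to run through). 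Without this non-existence result your argument for $r_d<12$ does not close.

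The $r_d=11$ case is also more delicate than ``a finer count''. One must separate the possibilities that (a) all fibres are multiplicative and one of the twelve components is missing from $\Gamma$, or (b) a single additive fibre appears. Case (b) requires knowing which additive types occur without wild ramification in characteristic~$2$ (only $\IV$ and $\IV^*$, via \cite{SSc}), and case (a) leads to $5\times\IK_2+\IK_1$ with the $\IK_1$ multiple, which generically forces a $2$-torsion section on the Jacobian---but in characteristic~$2$ a $2$-torsion section forces an additive fibre (\cite[Cor.~8.32]{SS-MWL}), giving the contradiction. These characteristic-$2$ structural facts are the missing ingredients in your sketch.
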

 
 \begin{proof}
 Assume first that $r_d=12$. Then Lemma \ref{lem:para} implies 
 that $\Gamma$ cannot be hyperbolic, so by Corollary \ref{cor:para},
 it is parabolic, and all curves in $\Gamma$ form fibre components (of the same degree $d$)
 of a genus one fibration on $Y$.
 Necessarily, this is semi-stable, and all reduced fibres have the same degree,
 hence the same type $\IK_{2n}$.
 (If there were an odd number of components, then $H.F$ would be odd for a general fibre $F$,
 but writing $H$ as a $\ZZ$-linear combination of fibre components and multisections
 of course leads to $H.F$ being even.)
 Along the same lines, the support of any multiple fibre may only be smooth or of type $\IK_{n}$.
This allows only for the following configurations:
\[
6\times \IK_2, \;\;\; 5\times \IK_2 + 2\times \IK_1, \;\;\; 2\times \IK_4 + 2\times \IK_2.
\]
Note that the first configuration is used in Proposition \ref{prop:all_d} 
-- outside characteristic $2$!
Indeed, so far our argument has been independent of the characteristic.
But then the last two configurations involve two multiple fibres
which is impossible in characteristic $2$.
Moreover
the first and the last configuration do not exist
on a rational elliptic surface in characteristic $2$ by \cite[Thm.\ 8.9]{SS-MWL}.
Hence $r_d<12$.

We continue by assuming that $r_d=11$.
As before, all curves in $\Gamma$ are supported on the fibres of a genus one fibration,
so there are two possibilities:
\begin{enumerate}
\item
either all singular fibres are multiplicative, and exactly one of the 12 rational fibre components
is not contained in $\Gamma$,
\item
or exactly one singular fibre is additive (without wild ramification),
and all rational fibre components are contained in $\Gamma$.
\end{enumerate}
The second case is easy to rule out, since the only additive fibre types
without wild ramification in characteristic $2$ are $\IV, \IV^*$ by \cite{SSc}.
But then the first has three fibre components, contradicting what we have seen above,
while the second has degree $12d$ which is too large to be met by $\IK_n$ fibres (supported on $\Gamma$)
by inspection of the Euler--Poincar\'e characteristic.

In the first case, one easily checks that the multiple fibre, if singular,
is supported on $\Gamma$, leaving the following configurations of 11 rational curves in $\Gamma$:
\[
2\times\IK_4 + \IK_2 (\text{multiple}) + A_1,\;\;\;\; 
5 \times \IK_2 + \IK_1 (\text{multiple}).
\]
The first configuration has $A_1$ inside $\IK_2$, so this cannot exist as pointed out before.
Similarly, the second configuration generally forces a $2$-torsion section upon the Jacobian by \cite{OS}.
But then, by \cite[Cor.\ 8.32]{SS-MWL}, there is an additive fibre in characteristic $2$, 
ruling out this configuration as well.
 \end{proof}

%

\section{An Enriques surface with 13 smooth rational curves of small degree}

In this section we demonstrate that the bound for $H^2$ in Theorem \ref{thm}
cannot be improved beyond $H^2>12d^2-2$ as soon as $d>1$.
To this end, we prove the following:

\begin{prop}
\label{prop:13}
Let $Y$ be the Enriques surface of Kondo's type VII
over a field of characteristic $\neq 2,5$.
Let $d\in\NN$.
Then $Y$ admits a polarization $H$ of degree $H^2=12d^2-2$
and 12 smooth rational curves of degree $d$ as well as one of degree $2$.
\end{prop}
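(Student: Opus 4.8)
The plan is to realize the thirteen curves explicitly on the given type VII surface and to read off the polarization from intersection data, in the spirit of Remark~\ref{rem:inverse-gram}. Since $Y$ has finite automorphism group, its set $\mathcal R$ of $(-2)$-curves and its set of genus one fibrations are both finite and explicitly described in \cite{Kondo}, \cite{Martin}. I would start from the fibration appearing in configuration \ref{ss:VII}, namely $|2E|$ for an $\IK_3$ half-pencil $E$, with reducible fibres of types $\IK_6$ and $\IK_2$ together with its three bisections. Among the $(-2)$-curves I would single out twelve curves $C_1,\dots,C_{12}$ destined to have degree $d$, and one further $(-2)$-curve $C_{13}$, disjoint from all the $C_i$, destined to have degree $2$.

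The key computation is to produce a nef class $v\in\Num(Y)$ with $v^2=12$, with $v.C_i=1$ for $i\le 12$, and with $v.C_{13}=0$; concretely I expect $v$ to arise as a sum $E_1+E_2+E_3+E_4$ of four half-pencils with $E_a.E_b=1$ for $a\neq b$, so that $(\sum_a E_a)^2 = 2\binom 42 = 12$ automatically (such pairwise-meeting half-pencils exist by \cite{Dolgachev-Kondo} as already used in Proposition~\ref{prop:even_d}). Setting
\[
H = d\,v - C_{13},
\]
one checks directly that $H.C_i = d - C_{13}.C_i = d$, that $H.C_{13} = -C_{13}^2 = 2$, and that
\[
H^2 = d^2 v^2 - 2d\,(v.C_{13}) + C_{13}^2 = 12d^2 - 2,
\]
which is the asserted degree. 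Equivalently, $H$ is the intrinsic polarization determined by the degree vector $(d,\dots,d,2)$ on a $\QQ$-basis chosen among the thirteen curves, computed as $G^{-1}\vec d$ in the notation of Remark~\ref{rem:inverse-gram}, realizing the sharpness predicted in the remark following Lemma~\ref{lem:conf76}.

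It remains to verify that $H$ is very ample, and here I would invoke Criterion~\ref{crit}: I must check that $H.E>0$ for every $(-2)$-curve $E\in\mathcal R$ and that $H.D\ge 3$ for every half-pencil $D$ of a genus one fibration on $Y$. Because $v$ is nef and big we have $v.E\ge 0$ always, so the only danger is a curve $E$ with $v.E=0$ and $C_{13}.E>0$, which would force $H.E\le 0$. Ruling this out—and likewise confirming $H.D\ge 3$ for the finitely many $D$—amounts to a finite inspection of the explicit dual graph of type VII, and this positivity check is the main obstacle: it requires that $C_{13}$ be suitably isolated from the fibral curves on which $v$ vanishes, and that the four half-pencils defining $v$ be chosen compatibly with this. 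Along the way I would also confirm that $C_1,\dots,C_{13}$ are the only $(-2)$-curves of $H$-degree at most $d$, so that the example genuinely witnesses $S_d=13$ and shows that the hypothesis $H^2>12d^2$ of Theorem~\ref{thm} cannot be relaxed below $12d^2-2$.
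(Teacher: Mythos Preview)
Your outline is essentially the paper's approach: pick the twelve curves on the type~VII surface (the nine components of the $\IK_6$ and $\IK_3$ fibres together with the three bisections $B_1,B_2,B_3$ avoiding a fixed $\IK_2$-component $\Theta$), set $C_{13}=\Theta$, solve for $H$ from the degree constraints, and verify very-ampleness via Criterion~\ref{crit}. Your formula $H=d\,v-C_{13}$ is exactly what the paper obtains once one writes $\Theta'\equiv 2E-\Theta$ in $\Num(Y)$: then
\[
H \;=\; (3d-2)E + d(B_1+B_2+B_3) + \Theta' \;\equiv\; d\,(3E+B_1+B_2+B_3)\;-\;\Theta,
\]
so $v=3E+B_1+B_2+B_3$ and indeed $v^2=12$, $v.\Theta=0$, $v.C_i=1$ for the twelve curves.

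One caution: your expectation that $v$ decomposes as $E_1+E_2+E_3+E_4$ with $E_a.E_b=1$ is not the route the paper takes, and on type~VII the obvious candidates (e.g.\ $E$ together with the half-pencils $E''_i=B_i+\Theta'$) have pairwise intersections $2$, not $1$; you would need to hunt through Kondo's list of fibrations to find such a quadruple, and it is not clear one exists with $v$ as their sum. The paper bypasses this by writing $H$ directly in the fibre/bisection basis, which also makes the Criterion~\ref{crit} check concrete: the delicate point is bounding $H.E'$ for the half-pencils $E'\equiv\tfrac12\IK_5$ and $\tfrac12\IK_9$, where one has to use that each such $E'$ contains two of the $B_i$ in a fibre to force $\Theta'.E'\leq 2$. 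Your ``finite inspection'' is correct in principle, but this is where the actual content lies, and you should expect to do it in the explicit basis rather than via an abstract half-pencil decomposition.
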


\begin{proof}
As in \ref{ss:VII},
$Y$ is endowed a genus one fibration with reducible fibres $\IK_6, \IK_3$ 
(ramified, corresponding to the half-pencil $E$)
and $\IK_2$ (or $\III$ in characteristic $3$).
Fix a component $\Theta$ of the $\IK_2$ resp.\ $\III$ fibre
and fix those three smooth rational bisections $B_1, B_2, B_3$ which do not meet $\Theta$
(and are pairwise disjoint).
We postulate that
\begin{itemize}
\item
$B_1, B_2, B_3$ and all components of the $\IK_6$ and $\IK_3$ fibres have degree $d$;
\item
$\deg(\Theta)=2$.
\end{itemize}
These degrees determine the 
polarization $H$ uniquely as
\[
H = (3d-2)E + d(B_1+B_2+B_3) + \Theta'
\]
where $\Theta'$ is the component of the $\IK_2$ resp.\ $\III$ fibre other than $\Theta$.
We claim that $H$ is very ample.
To verify this by Criterion \ref{crit},
one computes directly that
\begin{itemize}
\item
$H^2=12d^2-2\geq 10$;
\item
$H.C>0$ for any smooth rational curve on $Y$ (the other degrees 
appearing are $4d-2$ and $6d-2$);
\item
$H.C>0$ for any other curve in $Y$, since this is either a multisection of $|2E|$,
so $H.C\geq (3d-2)E.C\geq 3d-2$, or a (half)-fibre, so $H.C=d(B_1+B_2+B_3).C\geq 3d$.
\end{itemize}
It remains to discuss the case where $E'$ is a half-pencil $\not\equiv E$.
Consider the half-pencils 
$$E''_i=B_i+\Theta'
\;\; \text{ with bisections } \;\; B_j, B_k \;\; (\{i,j,k\}=\{1,2,3\}).
$$
Then either $E'\equiv E''_i$ for some $i$, so $H.E'\geq (B_j+B_k).\Theta'=4d$, or 
$E'$ is a multisection for $|2E|$ and all $|2E''_i|$, so
\begin{eqnarray}
\label{eq:E'}
H.E'\geq ((3d-2)E+B_i+\Theta').E'\geq 3d-1,
\end{eqnarray}
with equality for each $i$ if and only if $E'.E=1, E'.\Theta'=1$ and  $E'.B_i=0$.
Now $\Theta'$ meets each smooth rational curve on $Y$ with even multiplicity
by \cite[Fig.~7.7]{Kondo},
and the possible half-pencils are classified in \cite[Table 2]{Kondo}:
\begin{itemize}
\item
$E'\equiv\IK_2$ or $\IK_3$, so $E'.\Theta'\in 2\ZZ$;
\item
$E'\equiv\frac 12 \IK_5$ (for either of the two $\IK_5$ fibres) or $\frac 12 \IK_9$.
Since $E'.B_i=0$, each $B_i$ is contained in some fibre of $|2E'|$.
But then one of the fibres contains two of the $B_i$, so $\Theta'.E'\geq \Theta'.(B_i+B_j)/2=2$.
\end{itemize}
In either case, \eqref{eq:E'} thus improves to $H.E'\geq 3d\geq 3$.
Hence Criterion \ref{crit} applies to show that $H$ is very ample.
By construction, $Y$ contains the smooth rational curves of the given degrees (relative to $H$).
\end{proof}

\subsection*{Acknowledgement}
We thank the referee for helpful comments.

\end{document}